\DeclareMathAlphabet{\mathcal}{OMS}{cmsy}{m}{n} 
\DeclareMathOperator{\ad}{\mathsf{ad}}
\DeclareMathOperator{\cd}{cd}
\DeclareMathOperator{\dcobound}{d}
\DeclareMathOperator{\Der}{Der}
\DeclareMathOperator{\E}{E}
\DeclareMathOperator{\HH}{HH}
\DeclareMathOperator{\Homol}{H}
\DeclareMathOperator{\Hom}{Hom}
\newtheorem{theorem}{Theorem}[section]
\newtheorem{proposition}[theorem]{Proposition}
\newtheorem{lemma}[theorem]{Lemma}
\newtheorem{corollary}[theorem]{Corollary}
\newtheorem{conjecture}[theorem]{Conjecture}
\numberwithin{equation}{section}
\begin{document}

\title{On Lie $p$-algebras of cohomological dimension one}
\author{Pasha Zusmanovich}
\address{
Department of Mathematics, University of Ostrava, Ostrava, Czech Republic
}
\email{pasha.zusmanovich@osu.cz}
\date{First written January 3, 2016; last minor revision July 8, 2019}
\thanks{Indag. Math. \textbf{30} (2019), no.2, 288--299; arXiv:1601.00352}
\keywords{Cohomological dimension one; periodic Lie $p$-algebra; 
Lie algebras with restrictions on subalgebras} 
\subjclass[2010]{Primary 17B50, 17B55}

\begin{abstract}
We prove that a Lie $p$-algebra of cohomological dimension one is 
one-dimensional, and discuss related questions.
\end{abstract}

\maketitle

\section{Introduction}

The \emph{cohomological dimension} of a Lie algebra $L$ over a field $K$, 
denoted by $\cd(L)$, is defined as the right projective dimension of the trivial
$L$-module $K$, i.e., the minimal possible length of a finite projective 
resolution 
\begin{equation}\label{eq-resol}
\dots \to P_2 \to P_1 \to P_0 \to K
\end{equation}
consisting of right projective modules $P_i$ over the universal enveloping 
algebra $U(L)$, or infinity if no such finite resolution exists. (Of course,
right $U(L)$-modules here can be replaced by left ones). Since for every 
projective resolution (\ref{eq-resol}) and every $L$-module $M$, the cohomology of the 
induced complex
$$
0 \rightarrow M = \Hom_{U(L)}(K,M) \rightarrow \Hom_{U(L)}(P_1,M) \rightarrow
\Hom_{U(L)}(P_2,M) \rightarrow \dots
$$
of $L$-modules coincides with the Chevalley--Eilenberg cohomology 
$\Homol^\bullet(L,M)$, $L$ has cohomological dimension $n$ if and only if there
is an $L$-module $M$ such that $\Homol^n(L,M) \ne 0$, and one of the following 
equivalent conditions holds:
\begin{enumerate}[\upshape(i)]
\item
$\Homol^i(L,M) = 0$ for any $L$-module $M$, and any $i > n$;
\item
$\Homol^{n+1}(L,M) = 0$ for any $L$-module $M$.
\end{enumerate}

A similar notion may be defined for other classes of algebraic systems with good
cohomology theories, for example, for groups and associative algebras. 

The Shapiro lemma on the cohomology of a coinduced module implies that if $S$ is
a subalgebra of a Lie algebra $L$, then $\cd(S) \le \cd(L)$. As cohomological 
dimension of the one-dimensional Lie algebra is equal to one, the cohomological
dimension of any nonzero Lie algebra is $\ge 1$. In particular, the class of Lie
algebras of cohomological dimension one is closed with respect to subalgebras.

Due to the standard interpretation of the second cohomology, the condition for
a Lie algebra $L$ to be of cohomological dimension one is equivalent to the 
condition that each short exact sequence 
$$
0 \to M \to \cdot \to L \to 0
$$ 
of Lie algebras, where the $L$-module $M$ is considered as an abelian Lie algebra, splits. The latter condition holds for a free Lie algebra, due to
its universal property, and hence a free Lie algebra (of any rank) has 
cohomological dimension one. The same is true for free groups and free 
associative algebras.

The celebrated Stallings--Swan theorem says that for groups the converse is 
true: a group of cohomological dimension one is free (see, for example, 
\cite{lecturenotes}). A question by Bourbaki 
(\cite[Chapitre II, \S 2, footnote to Exercice 9]{bourbaki}) asks whether 
the same is true for Lie algebras, i.e., whether a Lie algebra of cohomological
dimension one is free.

Feldman \cite{feldman} answered this question affirmatively in the case of 
$2$-generated Lie algebras. For a while, it was widely believed that the answer
is affirmative in general (the author has witnessed several attempts of the 
proof\footnote{
And, curiously enough, the claim about freeness of a Lie algebra of 
cohomological dimension one is formulated as an Exercise 7.6.3 in the book 
\cite{weibel}.
}), 
until Mikhalev, Umirbaev and Zolotykh constructed an example of a non-free Lie 
algebra of cohomological dimension one over a field of characteristic $>2$ (see
\cite{muz}; note that the cases of characteristic zero and characteristic $2$ 
remain widely open). This example is not a $p$-algebra, and in the same paper they made the following conjecture: a Lie $p$-algebra of 
cohomological dimension one is a free Lie $p$-algebra 
(\cite[Conjecture 2]{muz}). As stated, the conjecture is somewhat misleading, 
for a free Lie $p$-algebra is not of cohomological dimension one: its cohomological dimension is equal to infinity. 
Indeed, for any nonzero element $x$ of such an algebra, the elements 
$x, x^{[p]}, x^{[p]^2}, \dots$ span an infinite-dimensional abelian subalgebra,
whose cohomological dimension is equal to infinity (see Lemma \ref{lemma-cd} 
below).

This conjecture may be repaired in two ways. First, one may merely ask for a
description of Lie $p$-algebras of cohomological dimension one. A (trivial) 
answer to this question is given in \S \ref{sec-cd1}: such algebras are 
one-dimensional. Another possibility is to replace cohomological dimension with
\emph{restricted} cohomological dimension; this is discussed in \S \ref{sec-p}.
Also, \S \ref{sec-cd1} contains auxiliary results and conjectures related to 
the old Jacobson conjecture about periodic Lie $p$-algebras, and in
\S \ref{sec-dim1} we discuss the problem of description of Lie $p$-algebras 
without $2$-dimensional subalgebras.

The ground field $K$ is arbitrary, unless stated otherwise. The direct sum, 
$\oplus$, is always understood as the direct sum of vector spaces.

\section{
Lie $p$-algebras of cohomological dimension one and almost-periodic algebras
}\label{sec-cd1}

The following lemma is elementary (and, undoubtedly, well known) but useful.

\begin{lemma}\label{lemma-cd}\hfill
\begin{enumerate}[\upshape(i)]
\item
The cohomological dimension of an abelian Lie algebra (finite- or 
infinite-dimensional) is equal to its dimension.
\item
The cohomological dimension of a finite-dimensional Lie algebra is equal to its
dimension.
\end{enumerate}
\end{lemma}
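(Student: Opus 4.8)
The plan is to establish each part by squeezing $\cd$ between a matching upper and lower bound: the upper bound coming from an explicit short resolution (equivalently, from the length of the Chevalley--Eilenberg complex), and the lower bound from a module with non-vanishing top cohomology.

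For part (i), suppose first that $L$ is abelian of finite dimension $n$. Then $U(L)$ is the symmetric algebra $S(L)$, and the Koszul complex $\Lambda^\bullet L \otimes_K S(L)$ is a free resolution of the trivial module $K$ of length $n$; hence $\cd(L) \le n$. For the opposite inequality I would observe that on an abelian Lie algebra the Chevalley--Eilenberg differential with trivial coefficients vanishes identically, so $\Homol^n(L,K) = \Hom_K(\Lambda^n L, K) \ne 0$, which gives $\cd(L) \ge n$. If $L$ is abelian of infinite dimension, then for every $n$ it has an $n$-dimensional subalgebra $S$, and the inequality $\cd(S) \le \cd(L)$ recalled in the introduction, together with the finite-dimensional case just proved, yields $\cd(L) \ge n$ for every $n$, i.e., $\cd(L) = \infty = \dim L$.

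For part (ii), write $L = Kx \oplus Ky$ with $[x,y] = y$. The upper bound $\cd(L) \le 2$ is immediate, and in fact holds for every two-dimensional Lie algebra: the cochain complex $\Hom_K(\Lambda^\bullet L, M)$ is concentrated in degrees $0,1,2$, so $\Homol^i(L,M) = 0$ for every $L$-module $M$ and every $i > 2$. The only real work is the lower bound, where the naive attempt fails --- with trivial coefficients $\Homol^2(L,K) = 0$, since the generator $x^* \wedge y^*$ of $\Lambda^2 L^*$ lies in the image of the Chevalley--Eilenberg differential. Instead I would take the one-dimensional module $M$ on which $x$ acts as the identity and $y$ as zero --- which is an $L$-module precisely because then $[x,y] = y$ acts as $0$ --- and check by a direct three-term computation that the differential $\Hom_K(L,M) \to \Hom_K(\Lambda^2 L, M)$ vanishes for this $M$; consequently $\Homol^2(L,M) \cong \Hom_K(\Lambda^2 L, M) \ne 0$, and therefore $\cd(L) = 2$.

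I would close with the remark that both lower bounds are instances of a general fact: by the standard duality in Chevalley--Eilenberg theory, an $n$-dimensional Lie algebra $L$ satisfies $\Homol^n(L,M) \ne 0$ for $M$ the one-dimensional module $\Lambda^n L$, so every finite-dimensional Lie algebra has cohomological dimension equal to its dimension; part (ii) is then just the case $n=2$, in which $\Lambda^2 L$ is precisely the module described above. Accordingly, the one point demanding genuine care throughout is the identification of the correct coefficient module in the lower bounds --- trivial coefficients being insufficient as soon as $L$ is nonabelian.
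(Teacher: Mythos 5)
Your proof is correct and follows essentially the same route as the paper: bound $\cd$ above by the dimension and exhibit nonvanishing top cohomology, with trivial coefficients in the abelian case and, for the two-dimensional nonabelian algebra, the one-dimensional module given by the trace of the adjoint representation --- which, after translating your convention $[x,y]=y$ into the paper's $[x,y]=x$, is exactly the module $Kv$ with $x\bullet v=0$, $y\bullet v=-v$ used there. Your closing observation that $\Homol^n(L,\bigwedge^n L)\ne 0$ for every $n$-dimensional $L$ (via Poincar\'e duality) is also correct and subsumes both lower bounds, but it is an aside rather than a different method.
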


\begin{proof}
(i)
It is clear that the cohomological dimension of a Lie algebra does not exceed 
its dimension. For an abelian Lie algebra $L$, we have 
$\Homol^n(L,K) = (\bigwedge^n L)^\star$ for any $n$ (where $^\star$ denotes the
dual vector space).

(ii)
This follows from the fact that the global dimension of the universal enveloping
algebra of an $n$-dimensional Lie algebra is equal to $n$ (see, for example, 
\cite[Exercise 7.7.2]{weibel}). 

Alternatively, one may employ the same scheme as used in the proof of the fact 
(actually, a long-standing conjecture of Seligman) that for any $n$-dimensional
Lie algebra $L$ over a field of positive characteristic, and any 
$0 \le k \le n$, there is a \emph{finite-dimensional} $L$-module $M$ such that 
$\Homol^k(L,M) \ne 0$ (\cite[\S 2]{dzhu} or \cite[Corollary 2.2]{fs}). The 
required modules $M$ are constructed as induced or coinduced modules over 
truncated, and hence finite-dimensional, versions of the universal enveloping 
algebra $U(L)$. If we replace those truncated versions by the ordinary universal
enveloping algebra, we will get the proof of the same statement valid over an 
arbitrary field (but, of course, the modules $M$ with non-vanishing cohomology 
will be no longer finite-dimensional).
\end{proof}

\begin{corollary}\label{cor-1}
A Lie algebra of cohomological dimension one does not contain a $2$-dimensional
subalgebra.
\end{corollary}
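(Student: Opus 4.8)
The plan is to combine the two parts of Lemma~\ref{lemma-cd} with the monotonicity of cohomological dimension under passage to subalgebras, recorded in the introduction as a consequence of the Shapiro lemma: if $S$ is a subalgebra of $L$, then $\cd(S) \le \cd(L)$. Concretely, I would argue by contradiction: suppose $\cd(L) = 1$ and $L$ contains a two-dimensional subalgebra $S$. Every two-dimensional Lie algebra is, up to isomorphism, either abelian or the nonabelian one with basis $\{x,y\}$, $[x,y] = x$; by parts (i) and (ii) of Lemma~\ref{lemma-cd} respectively, in both cases $\cd(S) = 2$. Then monotonicity gives $2 = \cd(S) \le \cd(L) = 1$, a contradiction, so no such $S$ exists.

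There is essentially no obstacle here — the statement is a direct corollary, and the only ingredients are the elementary classification of two-dimensional Lie algebras and the two computations already performed in the proof of the lemma. If one wished to avoid even invoking the classification, one could instead observe that any two-dimensional Lie algebra which is not abelian still fails to have cohomological dimension $\le 1$ by the second cohomology computation, while the abelian case is part (i); but the argument via the classification is cleaner and entirely self-contained given the lemma, so that is the route I would present.
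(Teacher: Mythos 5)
Your argument is correct and is precisely the one the paper intends: the two-dimensional subalgebra is either abelian or the nonabelian algebra, each has cohomological dimension two by Lemma~\ref{lemma-cd}, and this contradicts the monotonicity $\cd(S)\le\cd(L)$ coming from the Shapiro lemma. Nothing further is needed.
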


\begin{theorem}\label{th-1}
A Lie $p$-algebra of cohomological dimension one is one-dimensional.
\end{theorem}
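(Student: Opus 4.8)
The plan is to suppose $\dim L \ge 2$ and to exhibit a two-dimensional subalgebra of $L$, contradicting the Corollary. Two observations set the stage. First, for any nonzero $x \in L$, the span $A$ of $x, x^{[p]}, x^{[p]^2}, \dots$ is an abelian subalgebra of $L$ (as recalled in the Introduction; concretely, $\ad(x)$ annihilates every $x^{[p]^k}$ since $[x^{[p]^k},x] = \ad(x)^{p^k}(x) = 0$, whence all mutual brackets vanish). Being a subalgebra, $A$ satisfies $\cd(A) \le \cd(L) = 1$, so $\dim A \le 1$ by Lemma~\ref{lemma-cd}(i), and therefore $x^{[p]} = \alpha_x x$ for some scalar $\alpha_x \in K$. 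Second, the Corollary tells us that $L$ has no two-dimensional subalgebra, so for linearly independent $x,y$ we must have $[x,y] \notin Kx + Ky$; in particular $[x,y]\ne 0$, and (using $\dim L \ge 2$) the centre of $L$ is zero, i.e. $\ad(x)\ne 0$ for every $x\ne 0$.

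The next step exploits the identity $\ad(x)^p = \ad(x^{[p]}) = \alpha_x\,\ad(x)$, which says that $\ad(x)$ is a root of $t^p - \alpha_x t = t(t^{p-1}-\alpha_x)$. After extending scalars to the algebraic closure $\overline K$ --- an operation that preserves the cohomological dimension (as one checks via the Chevalley--Eilenberg complex together with flatness of $\overline K$ over $K$), preserves the $p$-algebra structure, and does not lower the dimension --- this polynomial splits into linear factors. Fix $x\ne 0$. If $\alpha_x\ne 0$, the $p$ roots are distinct, so $L$ is the direct sum of the eigenspaces of $\ad(x)$; since $\ad(x)\ne 0$, some eigenspace for a nonzero eigenvalue $\mu$ contains a nonzero vector $y$, which lies outside $Kx$ because $\ad(x)(x)=0$, and then $[x,y]=\mu y\in Kx+Ky$ exhibits $Kx+Ky$ as a two-dimensional subalgebra --- a contradiction. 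If instead $\alpha_x = 0$ for every $x$, then every $\ad(x)$ is nilpotent; pick $x\ne 0$, so that $\ad(x)$ is a nonzero nilpotent endomorphism, whose image and kernel meet nontrivially, giving $w\ne 0$ with $w=[x,v]$ and $[x,w]=0$. If $w\in Kx$, then $Kx+Kv$ is a subalgebra (two-dimensional, since $v\notin Kx$ as $[x,v]\ne 0$); if $w\notin Kx$, the relation $[x,w]=0\in Kx+Kw$ already contradicts the second observation. Either way we obtain a contradiction, so $\dim L\le 1$; and since $\cd(L)=1$ forces $L\ne 0$, we conclude $\dim L = 1$.

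The main obstacle I anticipate is the case $\alpha_x = 0$ for all $x$, where $\ad(x)$ is nilpotent and no eigenvectors are available: the forbidden two-dimensional subalgebra must be produced purely from nilpotency, via the fact that a nonzero nilpotent endomorphism of any vector space has intersecting image and kernel. A secondary technical point deserving care is the invariance of $\cd$ under extension of scalars, together with the observation that none of the linear-algebra steps --- splitting $t^p - \alpha_x t$, the eigenspace decomposition, the nilpotency argument --- requires $L$ to be finite-dimensional, the polynomial identity satisfied by each $\ad(x)$ being exactly what makes these arguments work in general.
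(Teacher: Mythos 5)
Your argument is correct in substance, but it takes a genuinely different route from the paper's main proof, which is much shorter: there, the relation $x^{[p]}=\lambda(x)x$ (obtained exactly as you obtain it, from the absence of two-dimensional subalgebras) is played off against Feldman's theorem \cite{feldman} that a $2$-generated Lie algebra of cohomological dimension one is free, the identity $(\ad x)^p(y)=\lambda(x)[y,x]$ being a forbidden relation in a free Lie algebra. What you do instead is, in effect, prove Lemma \ref{lemma-2dim} of the paper (a Lie $p$-algebra of dimension $>1$ over an algebraically closed field contains a two-dimensional subalgebra) by a direct dichotomy on $\ad x$ --- eigenspace decomposition when $\alpha_x\ne 0$, image-meets-kernel of a nilpotent operator when all $\alpha_x=0$ --- which is a neat elementary substitute for the paper's route through Proposition \ref{prop-jacobson} (Jacobson's binomial formula plus a polynomial-root argument) and Lemma \ref{cor-nilp}; and you then reduce an arbitrary ground field to the algebraically closed case by extension of scalars, which is exactly the step the paper declines to take (it restricts its alternative proof to algebraically closed fields, warning that cohomological dimension may increase under field extension, and handles general fields via Feldman's result instead).

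The scalar-extension step is therefore the one place where your justification must be replaced. The parenthetical argument you offer (Chevalley--Eilenberg complex plus flatness) does not work for infinite-dimensional $L$: the natural map from $\Hom_K\bigl(\bigwedge^n L,M\bigr)\otimes_K\overline K$ to $\Hom_{\overline K}\bigl(\bigwedge^n (L\otimes_K\overline K),M\otimes_K\overline K\bigr)$ need not be an isomorphism, so the cochain complexes cannot simply be identified after base change; moreover you claim more than you need (``preserves''), whereas the only direction required is $\cd(L\otimes_K\overline K)\le\cd(L)$. That inequality does hold, and by the easy argument: $U(L\otimes_K\overline K)\simeq U(L)\otimes_K\overline K$, and applying $-\otimes_K\overline K$ to a finite projective resolution of the trivial right $U(L)$-module $K$ preserves both exactness and projectivity, yielding a projective resolution of $\overline K$ over $U(L\otimes_K\overline K)$ of the same length; combined with $\cd\ge 1$ for any nonzero Lie algebra, this gives $\cd(L\otimes_K\overline K)=1$, so the paper's cautionary remark (which concerns a possible increase) is not an obstruction to the direction you use. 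With that repair in place, note also that your two opening observations ($x^{[p]}=\alpha_x x$, no two-dimensional subalgebra, trivial centre) must be re-derived for $L\otimes_K\overline K$ rather than for $L$ --- the same derivations apply verbatim once its cohomological dimension is known to be one --- and the remaining case analysis over $\overline K$ is sound, including the two points you flag: diagonalizability of an operator annihilated by the split separable polynomial $t^p-\alpha_x t$ with $\alpha_x\ne0$, and the nontrivial intersection of image and kernel of a nonzero nilpotent operator, neither of which requires finite-dimensionality.
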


\begin{proof}
Let $L$ be a Lie $p$-algebra of cohomological dimension one. For any $x\in L$, 
we have $[x,x^{[p]}] = 0$, and by Corollary \ref{cor-1},
\begin{equation}\label{eq-p1}
x^{[p]} = \lambda(x) x
\end{equation}
for some $\lambda(x) \in K$. 

Suppose $L$ is of dimension $>1$, and pick two linearly independent elements
$x,y \in L$. By \cite{feldman}, the subalgebra of $L$ generated by $x,y$ is 
free. But according to (\ref{eq-p1}), $(\ad x)^p(y) = \lambda(x)[y,x]$, a 
contradiction.
\end{proof}

Let us now reflect on the condition (\ref{eq-p1}). This condition is reminiscent
of various conditions on the $p$-map studied by Jacobson and others. The major 
open problem in this area is the conjecture of Jacobson that a periodic Lie 
$p$-algebra is abelian (see \cite[Chapter V, Exercise 16]{jacobson}). Recall 
that a Lie algebra $L$ is called \emph{periodic} if for any $x\in L$ there is 
integer $n(x)>0$ such that $x^{[p]^{n(x)}} = x$. The strongest result toward 
this conjecture is due to Premet: a periodic \emph{finite-dimensional} Lie 
algebra is abelian (\cite[Corollary 1]{premet}).

Generalizing the condition of periodicity, let us call a Lie $p$-algebra $L$
\emph{almost periodic}, if for any $x\in L$, there are an integer $n(x)>0$ and
an element $\lambda(x) \in K$ such that
\begin{equation}\label{eq-x-th}
x^{[p]^{n(x)}} = \lambda(x) x .
\end{equation}
The elements for which $\lambda(x) = 0$, i.e., $x^{[p]^{n(x)}} = 0$, will be 
called \emph{$p$-nilpotent}.

\begin{proposition}\label{prop-jacobson}
Let $L$ be an almost periodic Lie $p$-algebra of dimension $>1$ over an 
algebraically closed field, with $n(x)$'s uniformly bounded. Then $L$ contains 
a nonzero $p$-nilpotent element.
\end{proposition}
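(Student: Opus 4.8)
The plan is to exploit the hypothesis that the $n(x)$ are bounded, so that there is a fixed $N$ with $x^{[p]^N} = \lambda(x)\,x$ for every $x \in L$ (replace each $x$ by an appropriate further $p$-th power, or simply take $N = \mathrm{lcm}$ of the $n(x)$, using that if $x^{[p]^{n}} = \lambda x$ then $x^{[p]^{kn}} = \lambda^{1+p^n+\dots+p^{(k-1)n}} x$). So we may assume $n(x) = N$ identically. Now consider the map $\varphi \colon L \to L$, $\varphi(x) = x^{[p]^N}$. This is $p^N$-semilinear: $\varphi(\alpha x) = \alpha^{p^N}\varphi(x)$, and on the set where it does not vanish it is "diagonal" in the sense of \eqref{eq-x-th}. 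The goal is to produce a nonzero $x$ with $\varphi(x) = 0$.

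First I would argue by contradiction: suppose $\varphi(x) \ne 0$, i.e. $\lambda(x) \ne 0$, for every nonzero $x$. Then $\varphi$ is injective (if $\varphi(x) = 0$ then $\lambda(x) = 0$, forcing $x = 0$), hence bijective since $L$ is finite-dimensional. Pick any nonzero $x$; then $x$ lies in the image, so $x = \varphi(z)=z^{[p]^N}=\lambda(z) z'$ for some $z$ with $\lambda(z)\ne 0$... more usefully, iterate: the subalgebra generated by a single element $x$ under the $p$-operation and scalars, namely $\mathrm{span}\{x, x^{[p]}, x^{[p]^2}, \dots\}$, is abelian (all these elements commute). By Lemma~\ref{lemma-cd} and its Corollary this subalgebra cannot be $2$-dimensional — wait, that Corollary is about cohomological dimension one, which we are \emph{not} assuming here. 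So instead I would use \eqref{eq-x-th} directly: $x^{[p]^N} = \lambda(x) x$ says precisely that $x, x^{[p]}, \dots, x^{[p]^{N-1}}$ already span a $\varphi$-invariant (at most $N$-dimensional) subspace, and on it $\varphi$ permutes the coordinates up to a scalar. Passing to the algebraic closure, after a semilinear change I would try to normalize $\lambda(x)$ to $1$ on a cyclic vector (solve $\mu^{p^N-1} = \lambda(x)$, possible since $K$ is algebraically closed), reducing to the genuinely periodic case on that cyclic subspace.

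The real content: take a nonzero $x$ with $\varphi(x)\ne 0$ and $\lambda(x)=1$ after the above normalization, so $x^{[p]^N} = x$; then $Kx$ is a $1$-dimensional $p$-subalgebra, and more to the point $x$ generates a torus-like abelian $p$-subalgebra. The idea is then to look at $\mathrm{ad}\,x$ acting on $L$: since $x^{[p]^N} = x$, the operator $(\mathrm{ad}\,x)^{p^N} = \mathrm{ad}(x^{[p]^N}) = \mathrm{ad}\,x$, so $\mathrm{ad}\,x$ satisfies $t^{p^N} - t = 0$, hence is \emph{semisimple} with eigenvalues in $\mathbb{F}_{p^N} \subseteq K$. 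Thus $L$ decomposes into root spaces for the commuting family of such $\mathrm{ad}$'s. If \emph{every} nonzero element were non-$p$-nilpotent, every $\mathrm{ad}\,x$ is semisimple with eigenvalues in a fixed finite field, and $L$ would be spanned by a commuting set of semisimple elements plus their root vectors — I would derive that $L$ is then either abelian (contradicting $\dim L > 1$? no — abelian is allowed) ... so the final step must instead produce the nilpotent element \emph{inside} a root space: if $y$ is a root vector for $\mathrm{ad}\,x$ with nonzero root $\alpha$, then $[x, y] = \alpha y$ gives a $2$-dimensional nonabelian subalgebra $\mathrm{span}\{x,y\}$, and in such an algebra $y^{[p]} \in Kx$-component computations force $y$ to be $p$-nilpotent (one checks $y^{[p]}$ commutes with $x$ only if... ); alternatively, all roots are zero, $L$ is nilpotent as a Lie algebra, and then \eqref{eq-x-th} for $x$ in the center combined with $\mathrm{ad}$-nilpotency forces $p$-nilpotent elements to exist.

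The main obstacle I anticipate is the normalization/base-change step and handling the case where $\mathrm{ad}\,x$ has \emph{no} nonzero eigenvalue for every non-$p$-nilpotent $x$: there one must show a $>1$-dimensional Lie $p$-algebra in which $\mathrm{ad}$ of every element is nilpotent cannot have \emph{all} elements satisfying $x^{[p]^N} = \lambda(x)x$ with $\lambda(x)\ne 0$ — essentially that such an algebra would be abelian and then \eqref{eq-x-th} gives a semisimple $p$-operator on an abelian algebra over an algebraically closed field, where a non-trivial eigenspace decomposition must still leave room for a zero eigenvalue once $\dim L > 1$, unless the $p$-map is an injective $p^N$-semilinear bijection on a $>1$-dimensional space — which is impossible by a dimension/Frobenius-twist count (a bijective $p^N$-semilinear map with $x^{[p]^N}=\lambda(x)x$ would make $L$, as a module over $\mathbb{F}_{p^N}[\varphi]$, a sum of $1$-dimensional pieces, fine, but the identity $x^{[p]^N}=\lambda(x)x$ must hold for \emph{all} $x$, not just basis vectors, and linearity of $\varphi$ over $\mathbb{F}_{p^N}$ together with $p^N$-semilinearity over $K$ forces $\dim_K L \le 1$). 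That last count is the crux and where I would spend the most care.
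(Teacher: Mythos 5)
Your strategy is genuinely different from the paper's (which fixes $n$, forms the polynomial $\varphi_{xy}(t)=\lambda(x+ty)$ via the Jacobson binomial formula, and either finds a root of it, giving the $p$-nilpotent element $x+\xi y$, or concludes that $\lambda$ is constant and derives $\alpha^{p^n}=\alpha$ for all $\alpha\in K$), and in outline your route is viable; but as written it has concrete gaps. First, the step ``injective, hence bijective since $L$ is finite-dimensional'' is both unjustified and unavailable: the Proposition does not assume finite-dimensionality (the paper explicitly advertises it as a statement about not necessarily finite-dimensional algebras), and on a non-abelian algebra the map $x\mapsto x^{[p]^N}$ is not $p^N$-semilinear --- it is only homogeneous, $(\alpha x)^{[p]^N}=\alpha^{p^N}x^{[p]^N}$ --- so a trivial ``kernel'' gives neither injectivity nor bijectivity. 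Fortunately you never use this. The eigenspace decomposition you do use survives in infinite dimension: under the contradiction hypothesis $\lambda(x)\neq0$ for all $x\neq0$, the operator $\ad x$ is annihilated by $t^{p^N}-\lambda(x)t$, which is separable and splits over the algebraically closed field $K$, so $L$ is the direct sum of the eigenspaces of $\ad x$ whatever $\dim L$ is.

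Second, both of your decisive steps are left unfinished. The root-vector step (the ``one checks\dots'' that trails off) is completable: if $[x,y]=\alpha y$ with $\alpha\neq0$, then $[y^{[p]},x]=(\ad y)^p(x)=(\ad y)^{p-1}(-\alpha y)=0$, and inductively $[x,y^{[p]^k}]=0$ for all $k\ge1$, so $0=[x,y^{[p]^N}]=\lambda(y)[x,y]=\lambda(y)\alpha y$ forces $\lambda(y)=0$, i.e.\ $y$ is a nonzero $p$-nilpotent element, the desired contradiction. The abelian case --- which you yourself call the crux --- is only gestured at, and the proposed ``Frobenius-twist count'' is not an argument (splitting $L$ into one-dimensional $\varphi$-invariant pieces does not by itself bound $\dim_K L$). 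What actually finishes it: in an abelian $L$ the map $\varphi(x)=x^{[p]^N}$ is additive, so for linearly independent $x,y$, comparing $\varphi(x+y)=\lambda(x+y)(x+y)$ with $\lambda(x)x+\lambda(y)y$ gives $\lambda(x)=\lambda(y)$; applying this to the pair $\alpha x,\,y$ and using $\lambda(\alpha x)=\alpha^{p^N-1}\lambda(x)\neq0$ yields $\alpha^{p^N-1}=1$ for every nonzero $\alpha\in K$, impossible since $K$, being algebraically closed, is not contained in $\mathbb{F}_{p^N}$. Note that this is exactly the scalar-substitution trick with which the paper ends its own shorter, uniform proof; so your approach, once these two computations are supplied, is correct, and its merit is structural (it isolates where non-abelianness is used), but its hardest branch reduces to the same final mechanism as the paper's.
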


Note some other related results connecting properties of Lie ($p$-)algebras and
its elements:
\begin{enumerate}[\upshape(i)]
\item
Latyshev in \cite[Proof of Lemma 1]{latyshev} provides interesting reasonings
related to the condition (\ref{eq-p1}).
\item
Chwe proved in \cite{chwe} (see also \cite[Theorem 3.10]{strade-farn}) that a 
Lie $p$-algebra over an algebraically closed field with a nondegenerate $p$-map
is abelian. 
\item
Farnsteiner investigated in \cite{farns-semilin} Lie $p$-algebras in which some
power $[p]^n$ of the $p$-map is $p^n$-semilinear. The condition (\ref{eq-x-th})
is somewhat reminiscent of semilinearity (in some sense stronger, in some sense
weaker). 
\item It is well known that any finite-dimensional Lie algebra over an 
algebraically closed field contains a nilpotent element. (For Lie $p$-algebras,
this follows from the Seligman--Jordan--Chevalley decomposition -- see, for 
example, \cite[Proof of Theorem 3]{premet}, and for a short elementary proof 
valid for arbitrary Lie algebras, see \cite{benkart-isaacs}). 
Proposition \ref{prop-jacobson} establishes a similar result for not necessarily
finite-dimensional Lie algebras, but subject to the strong condition of 
uniformly bounded $p$-periodicity.
\end{enumerate}

Note also that the condition of the ground field being algebraically closed 
cannot be dropped from the proposition, for any nonsplit $3$-dimensional simple
Lie algebra over a field of characteristic $p>0$ provides a counterexample: it 
satisfies the condition $x^{[p]} = \lambda(x) x$ for any nonzero element $x$, 
but does not have nonzero $p$-nilpotent elements (i.e., $\lambda(x) \ne 0$ for 
any $x\ne 0$).

\begin{proof}[Proof of Proposition \ref{prop-jacobson}]
Since $n(x)$'s are uniformly bounded, we may assume that
\begin{equation}\label{eq-x}
x^{[p]^n} = \lambda(x) x
\end{equation}
for some fixed $n$ (for example, by letting $n$ to be the product of all 
distinct $n(x)$'s, and redenoting $\lambda(x)$'s appropriately).

Pick any two linearly independent elements $x,y\in L$, and set 
$\varphi_{xy}(t) = \lambda(x + ty)$, for $t\in K$. Using the well known Jacobson
binomial formula for the $p$-map (strictly speaking, its generalization for the
$n$th power of the $p$-map -- see, for example, \cite[\S 1]{farns-semilin}), we
have
\begin{equation}\label{eq-pn}
\varphi_{xy}(t)(x + ty) = (x + ty)^{[p]^n} = 
x^{[p]^n} + t^{p^n}y^{[p]^n} + \sum_{i=1}^{p^n-1} t^i s_i(x,y) =
\lambda(x)x + t^{p^n}\lambda(y)y + \sum_{i=1}^{p^n-1} t^i s_i(x,y) , 
\end{equation}
where $s_i(x,y)$ are certain Lie monomials in $x,y$. Completing $x,y$ to a basis
of $L$, writing the $s_i(x,y)$'s as linear combinations of basis elements, and 
collecting all coefficients of $x$ in (\ref{eq-pn}), we get that 
$\varphi_{xy}(t)$ is a polynomial in $t$ with the constant term $\lambda(x)$.

Suppose that there is a pair $x,y$ such that $\varphi_{xy}(t)$ is not constant. 
Since the ground field $K$ is algebraically closed, $\varphi_{xy}(t)$ has a
root $\xi$. This means that the nonzero element $x + \xi y$ is nilpotent.

Suppose now that for any pair $x,y\in L$, $\varphi_{xy}(t)$ is constant, i.e.,
$\varphi_{xy}(t) = \lambda(x)$. This means that $\lambda(x + ty) = \lambda(x)$ 
for any linearly independent $x,y\in L$, and any $t\in K$, and, consequently, 
$\lambda(x) = \lambda$ is constant. If $\lambda \ne 0$, then substituting in 
(\ref{eq-x}) $\alpha x$ instead of $x$, we get that $\alpha^{p^n} = \alpha$ 
for any $\alpha \in K$, i.e., $K$ is a finite field, a contradiction. Hence 
$\lambda = 0$, and every element of $L$ is nilpotent. 
\end{proof}

Proposition \ref{prop-jacobson} can be used to give an alternative proof of 
Theorem \ref{th-1}, not utilizing the Feldman result about $2$-generated Lie 
algebras. For that, we need another couple of elementary lemmas.

\begin{lemma}\label{cor-nilp}
Let $x,y$ be two elements of a Lie algebra without $2$-dimensional subalgebras,
such that $(\ad x)^n y = 0$ for some $n$. Then $x,y$ are linearly dependent.
\end{lemma}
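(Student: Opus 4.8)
The plan is to argue by contradiction. If $x=0$ or $y=0$ there is nothing to prove, so I will assume $x,y\neq 0$ and, for contradiction, that $x$ and $y$ are linearly independent. The property I will exploit repeatedly is that in a Lie algebra with no two-dimensional subalgebra, whenever $[u,v]\in Ku+Kv$ the elements $u,v$ must be linearly dependent --- for otherwise $Ku+Kv$ would be a two-dimensional subalgebra.

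First I would let $m\geq 1$ be the least integer with $(\ad x)^m y=0$; it exists by hypothesis, and $m\geq 1$ because $y\neq 0$. Put $z=(\ad x)^{m-1}y$. By minimality of $m$ we have $z\neq 0$, while $[x,z]=(\ad x)^m y=0\in Kx+Kz$. Hence $x$ and $z$ are linearly dependent, and since both are nonzero, $z=\mu x$ for some $\mu\neq 0$. If $m=1$ this already gives $y=z=\mu x$, contradicting linear independence, so I may assume $m\geq 2$ and set $w=(\ad x)^{m-2}y$. Then $[x,w]=(\ad x)^{m-1}y=z=\mu x\in Kx+Kw$, so $x$ and $w$ are again linearly dependent, say $w=\nu x$; but then $\mu x=[x,\nu x]=0$, forcing $\mu=0$, a contradiction. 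Therefore $x$ and $y$ are linearly dependent.

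I do not anticipate a genuine obstacle: the argument is a short two-step descent along the chain $y,(\ad x)y,(\ad x)^2y,\dots$, using at each step the reformulation of the ``no two-dimensional subalgebra'' hypothesis. The only points requiring care are the degenerate cases at the bottom of the chain (when $m\in\{0,1\}$, or when $w=0$), and making sure the bracket computation $[x,\nu x]=0$ is available because $x$ is genuinely a single element rather than a subspace; both are routine.
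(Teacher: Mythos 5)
Your proof is correct and is essentially the paper's argument: both rest on the observation that, absent two-dimensional subalgebras, two elements whose bracket lies in their span must be linearly dependent, applied along the chain $(\ad x)^k y$. The only difference is presentational --- you take the minimal $m$ with $(\ad x)^m y=0$ and reach a contradiction in two steps, whereas the paper iterates the same degree-lowering step down to $[y,x]=0$.
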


\begin{proof}
Applying repeatedly the condition of absence of $2$-dimensional subalgebras, we
can lower the degree $n$. Indeed, $(\ad x)^n y = [(\ad x)^{n-1}(y),x] = 0$ 
implies $[(\ad x)^{n-2}(y),x] = (\ad x)^{n-1}(y) = \lambda x$ for some 
$\lambda \in K$, which, in turn, implies $\lambda = 0$. Repeating this process, 
we get eventually $[y,x] = 0$, and hence $x,y$ are linearly dependent.
\end{proof}

\begin{lemma}\label{lemma-2dim}
A Lie $p$-algebra of dimension $>1$ over an algebraically closed field contains
a $2$-di\-men\-si\-o\-nal subalgebra.
\end{lemma}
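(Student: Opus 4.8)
The plan is to argue by contradiction: assume $L$ is a Lie $p$-algebra of dimension $>1$ over an algebraically closed field $K$ containing \emph{no} two-dimensional subalgebra, and derive a contradiction. The first step exploits the $p$-map directly. For any $x\in L$ one has $[x,x^{[p]}]=0$: applying the identity $(\ad x)^p=\ad(x^{[p]})$ to $x$ itself gives $0=(\ad x)^{p-1}[x,x]=(\ad x)^p(x)=\ad(x^{[p]})(x)=[x^{[p]},x]$. Hence if $x$ and $x^{[p]}$ were linearly independent they would span a two-dimensional abelian subalgebra, which is excluded. Therefore $x^{[p]}=\lambda(x)x$ for some $\lambda(x)\in K$ (with $\lambda(0):=0$) --- exactly condition (\ref{eq-p1}), only now derived from the hypothesis rather than from $\cd(L)=1$.

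The second step feeds this into Proposition \ref{prop-jacobson}. The displayed relation says $L$ is almost periodic with $n(x)=1$ for every $x$, so in particular all the $n(x)$ are bounded; $L$ has dimension $>1$ and $K$ is algebraically closed, so all hypotheses of the Proposition are met. It therefore produces a nonzero $p$-nilpotent element. Since every $x$ satisfies $x^{[p]}=\lambda(x)x$, iterating gives $x^{[p]^m}=\lambda(x)^{1+p+\dots+p^{m-1}}x$, so a nonzero element killed by some power of the $p$-map must already have $\lambda(x)=0$; thus we obtain $x\neq 0$ with $x^{[p]}=0$.

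The third step finishes the argument. From $x^{[p]}=0$ and the $p$-map identity we get $(\ad x)^p=\ad(x^{[p]})=0$, so $\ad x$ is nilpotent. Choosing any $y\in L$ not proportional to $x$ (possible as $\dim L>1$), we have $(\ad x)^p(y)=0$, and Lemma \ref{cor-nilp} forces $x$ and $y$ to be linearly dependent --- a contradiction. The computations here are all routine; the conceptual core, and the only place calling for care, is the dichotomy in the first step (either $x,x^{[p]}$ are independent and we are done immediately, or we are pushed into the almost-periodic regime) together with the bookkeeping in the second step needed to extract a genuine $p$-nilpotent element satisfying $x^{[p]}=0$, as opposed merely to $x^{[p]^m}=0$, so that $\ad x$ is actually nilpotent and Lemma \ref{cor-nilp} applies.
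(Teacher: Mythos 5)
Your proposal is correct and follows essentially the same route as the paper: derive $x^{[p]}=\lambda(x)x$ from the absence of two-dimensional subalgebras (since $[x,x^{[p]}]=0$), invoke Proposition \ref{prop-jacobson} with $n(x)=1$ to produce a nonzero $p$-nilpotent element, and then use Lemma \ref{cor-nilp} to reach a contradiction. The extra bookkeeping you include (checking that the $p$-nilpotent element actually satisfies $x^{[p]}=0$, hence $(\ad x)^p=0$) just spells out details the paper leaves implicit.
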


This lemma may be considered as an extension of the elementary fact that
a finite-dimensional Lie algebra of dimension $>1$ over an algebraically closed
field contains a $2$-dimensional subalgebra (for the situation over 
non-algebraically closed fields, see the next section). We do not assume 
finite-dimensionality, but the presence of a $p$-structure is the condition 
strong enough to infer the same conclusion.

\begin{proof}
Let $L$ be a Lie $p$-algebra without two-dimensional subalgebras. By the same
reasoning as in the proof of Theorem \ref{th-1}, $L$ satisfies the condition 
(\ref{eq-p1}). According to Proposition \ref{prop-jacobson} (with $n(x) = 1$ for
all $x$), $L$ is either one-dimensional, or contains a nonzero nilpotent 
element. In the latter case by Lemma \ref{cor-nilp}, $L$ is one-dimensional too,
a contradiction.
\end{proof}

\begin{proof}[An alternative proof of Theorem \ref{th-1}]
Let $L$ be a Lie $p$-algebra of cohomological dimension one over a field $K$. 
Consider the Lie algebra $\overline L = L \otimes_K \overline K$ over the 
algebraic closure $\overline K$ of $K$. Any $\overline L$-module $M$ is also an 
$L$-module, and by the change-of-ring theorem (see, for example, 
\cite[Theorem 9.1.7]{weibel}), 
$\Homol^2(\overline L,M) \simeq \Homol^2(L,M) = 0$, hence $\overline L$ is a Lie
$p$-algebra of cohomological dimensional one over an algebraically closed 
field. Now Lemma \ref{lemma-2dim} and Corollary \ref{cor-1} imply that 
$\overline L$, and hence $L$, is one-dimensional.
\end{proof}

\section{Digression: Lie $p$-algebras without $2$-dimensional subalgebras
}\label{sec-dim1}

Would it be possible in the latter proof to avoid passage to the algebraic 
closure of the ground field and reference to the change-of-ring theorem, and 
argue directly about algebras over an arbitrary field? Note that the condition of algebraic 
closedness cannot be removed from Lemma \ref{lemma-2dim}, by virtue of the same counterexample as to Proposition \ref{prop-jacobson}: a 
nonsplit $3$-dimensional simple Lie algebra. This is, however, together with
another $7$-dimensional algebra peculiar to characteristic $3$, is the only 
finite-dimensional counterexample, as the following proposition shows.

\begin{proposition}\label{prop-2}\hfill
\begin{enumerate}[\upshape(i)]
\item
A finite-dimensional Lie $p$-algebra of dimension $>1$ over a field of 
characteristic $p>3$ does not contain $2$-di\-men\-si\-o\-nal subalgebras if and
only if it is a nonsplit $3$-dimensional simple algebra.
\item
A finite-dimensional Lie $3$-algebra of dimension $>1$ over a field of 
characteristic $3$ does not contain $2$-dimensional subalgebras if and only if 
it is either a nonsplit $3$-dimensional simple algebra, or a nonsplit form of 
$psl(3)$.
\item
A finite-dimensional Lie $2$-algebra of dimension $>1$ over a field of 
characteristic $2$ contains a $2$-dimensional subalgebra. 
\end{enumerate}
\end{proposition}

To prove this proposition, we will need the following elementary lemmas.

\begin{lemma}\label{lemma-kx}
Let $L$ be a finite-dimensional Lie algebra without $2$-dimensional subalgebras.
Then for any nonzero $x\in L$, $L = Kx \oplus [L,x]$.
\end{lemma}

\begin{proof}
The statement is vacuous if $L$ is $1$-dimensional, so assume $\dim L > 1$. As the kernel of the linear map $\ad x$ is $1$-dimensional, its image $[L,x]$
is of codimension $1$ in $L$. If $x \in [L,x]$, then $x = [y,x]$ for some 
$y \in L$, and $Kx \oplus Ky$ is a $2$-dimensional subalgebra, a contradiction.
\end{proof}

The following lemma can be deduced from \cite[Proposition 3.2]{varea87}. We give
a direct short proof.

\begin{lemma}\label{lemma-l}
A finite-dimensional Lie algebra of dimension $>1$ without $2$-dimensional 
subalgebras is simple.
\end{lemma}

\begin{proof}
Let $L$ be a finite-dimensional Lie algebra of dimension $>1$ without 
$2$-dimensional subalgebras, and $I$ is a proper ideal in $L$. Then either $I$ 
is one-dimensional, or $I$ also does not have $2$-dimensional subalgebras. In 
the former case write $I = Kx$ for some $x\in I$; then for any 
$y \in L \backslash I$, $Kx \oplus Ky$ is a $2$-dimensional subalgebra, a contradiction. 

In the latter case, take a nonzero $x\in I$, and a nonzero $y\in L\backslash I$.
Since $[y,x] \in I$, by Lemma \ref{lemma-kx} there are $\lambda \in K$ and 
$a\in I$ such that $[y,x] = \lambda x + [a,x]$. But then $K(y-a) \oplus Kx$ is a
$2$-dimensional subalgebra in $L$, a contradiction.
\end{proof}

\begin{proof}[Proof of Proposition \ref{prop-2}]
We merely push a bit further arguments from \cite[\S 1]{latyshev}. Let $L$ be a
finite-dimensional Lie $p$-algebra of dimension $> 1$ without $2$-dimensional
subalgebras. By the same reasoning as in Lemma \ref{lemma-kx}, for any nonzero 
$h\in L$, $Kh$ coincides with its own normalizer, and hence is a Cartan 
subalgebra of $L$. For any nonzero $x\in L$, and any nonzero element $\gamma$ from the centroid of $L$,
the elements $x$ and $\gamma x$ are commuting, and hence are linearly dependent
over the base field $K$. Therefore, the centroid of $L$ coincides with $K$, and
since by Lemma \ref{lemma-l} $L$ is simple, $L$ is central simple. In 
particular, $\overline L = L \otimes_K \overline K$ is a simple Lie algebra over the 
algebraic closure $\overline K$ of $K$, and $h \otimes_K \overline K$ is an 
one-dimensional Cartan subalgebra in $\overline L$, i.e., $\overline L$
is a simple Lie $p$-algebra of rank one. 

If $p>3$, then by \cite[Theorem 3]{kaplansky}, $\overline L$ is isomorphic 
either to $sl(2)$, or to the ($p$-dimensional) Witt algebra. But each form of 
the Witt algebra is isomorphic to the derivation algebra of 
$K[x]/(x^p - \lambda 1)$ for some $\lambda \in K$ (see, for example, 
\cite[Proposition 3 and Corollary 1]{waterhouse}) and hence contains a 
$2$-dimensional nonabelian subalgebra (for example, spanned by the elements 
$\frac{\dcobound}{\dcobound x}$ and $x\frac{\dcobound}{\dcobound x}$), a 
contradiction. Hence $L$ is a form of $sl(2)$, obviously nonsplit. Conversely,
all proper subalgebras of a nonsplit form of $sl(2)$ are one-dimensional.

Let $p=3$. As in the proof of Theorem \ref{th-1}, $L$ satisfies the condition
(\ref{eq-p1}), and, arguing as in the proof of \cite[Theorem 3]{kaplansky}, over
$\overline K$ we may rescale $h$ to get an element $h^\prime$ such that 
${h^\prime}^{[3]} = h^\prime$. Hence $\overline K h^\prime$ is an 
one-dimensional Cartan subalgebra of $\overline L$, with roots lying in the 
prime subfield $GF(3)$, and by \cite[Theorem 9]{kaplansky}, $\overline L$ is 
isomorphic either to $sl(2)$, or to ($7$-dimensional) $psl(3)$. Each form of 
$psl(3)$ is isomorphic to the quotient $\mathbb O^{(-)}/K1$ for an octonion 
algebra $\mathbb O$ over $K$ (considered as a Lie algebra with respect to 
commutator $[a,b] = ab-ba$), by the $1$-dimensional center $K 1$ (see, for 
example, \cite[Theorem 4.26]{elduque-kochetov}). If $\mathbb O$ is nonsplit, 
then each pair of commuting elements in $\mathbb O$ generates a nonsplit 
quaternion algebra (see, for example, \cite[proof of Theorem 3.17]{schafer}),
and hence each proper Lie subalgebra of $\mathbb O^{(-)}/K1$ is either 
one-dimensional, or nonsplit $3$-dimensional simple (alternatively, we may
employ again Lemma \ref{lemma-l} and \cite[Theorem 9]{kaplansky} to reach the 
same conclusion; these $7$-dimensional algebras also appear, with a different
matrix realization, in \cite[Example 2]{gein}).

If $p=2$, the same reasonings as in the case $p=3$ show that $\overline L$ has 
an one-dimensional Cartan subalgebra with roots lying in the prime subfield 
$GF(2)$. Thus there are just two roots, $0$ and $1$, and by 
\cite[Theorem 8]{kaplansky} the root space $\overline L_1$ is $2$-dimensional, 
so $\overline L$ is $3$-dimensional. But then it is easy to see that 
$\overline L$ is not a $p$-algebra (its $p$-envelope is $5$-dimensional), hence
neither is $L$, a contradiction.
\end{proof}

Over perfect fields of characteristic $\ne 2,3$, a result similar to 
Proposition \ref{prop-2}(i) holds for any finite-dimensional Lie algebra, not 
necessary $p$-algebra. Indeed, by the results of Premet (\cite{premet-bssr} 
and references therein), any such Lie algebra $L$ either contains a noncentral 
element $x$ such that $(\ad x)^2 = 0$, or satisfies 
$\mathfrak g \subseteq L \subseteq \Der(\mathfrak g)$ for some form of a 
classical simple Lie algebra $\mathfrak g$. If $L$ does not contain
$2$-dimensional subalgebras, Lemma \ref{cor-nilp} rules out the first 
possibility, and the second possibility together with Lemma \ref{lemma-l} 
implies that $L = \mathfrak g$ is a form of a classical simple Lie algebra. But
since $L$ is of rank $1$, $L$ is $3$-dimensional.

Over arbitrary, not necessary perfect, fields, where the Galois-cohomological
machinery used in \cite{premet-bssr} is not available, arguments similar to, but
more involved than those used in the proof of Proposition \ref{prop-2}, and
addressing subtle questions such as the existence of Cartan subalgebras in a 
Lie algebra over an arbitrary field, and structure of forms of 
Albert--Zassenhaus algebras, may be used to get a description of arbitrary
finite-dimensional Lie algebras with even more weaker restrictions on 
subalgebras (for example, Lie algebras all whose abelian subalgebras are 
one-dimensional). This, however, will lead us too far away from the current 
topic and, hopefully, will be treated elsewhere.

Returning to Lie $p$-algebras without $2$-dimensional subalgebras: the 
infinite-dimensional situation is, naturally, much more difficult: all what we 
can do is to make the following

\begin{conjecture}\label{conj-3}
An infinite-dimensional Lie $p$-algebra contains a $2$-di\-men\-si\-o\-nal 
subalgebra.
\end{conjecture}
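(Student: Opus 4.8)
Let us indicate a possible line of attack. The natural plan is to prove the contrapositive: \emph{a Lie $p$-algebra $L$ without two-dimensional subalgebras satisfies $\dim L\le 3$}. As in the proofs of the Theorem and of Lemma~\ref{lemma-2dim}, such an $L$ satisfies $x^{[p]}=\lambda(x)\,x$ for all $x$, and for each nonzero $x$ one extracts three consequences of the absence of two-dimensional subalgebras, in the spirit of Proposition~\ref{prop-jacobson} and Lemma~\ref{cor-nilp}: first, $\lambda(x)\ne 0$ (otherwise $(\ad x)^p=\ad(x^{[p]})=0$, so $\ad x$ is nilpotent, and Lemma~\ref{cor-nilp} forces $\dim L=1$); second, $C_L(x)=Kx$ (otherwise a further element of $C_L(x)$ spans a two-dimensional abelian subalgebra with $x$); third, $\ad x$ has no nonzero eigenvalue lying in $K$ (otherwise an eigenvector for such an eigenvalue and a suitable scalar multiple of $x$ span a two-dimensional non-abelian subalgebra). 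Since $\lambda(x)\ne 0$, the operator $\ad x$ annihilates the separable polynomial $T(T^{p-1}-\lambda(x))$, hence is semisimple; therefore $L=Kx\oplus[L,x]$, with $\ad x$ invertible on $[L,x]$ and with no eigenvalue in $K$ there, and $\dim L=1+\dim[L,x]$ for every $x\ne 0$.

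Next comes a reduction to the simple case. Every nonzero ideal of $L$ contains some $x\ne 0$, hence contains $[L,x]$, which has codimension one; so $L$ is either simple or has a codimension-one ideal $I$, and since $x^{[p]}=\lambda(x)x$, such an $I$ is again a Lie $p$-algebra without two-dimensional subalgebras. In finite dimension an induction on $\dim L$ then leaves only $\dim L=4$, $\dim I=3$ to settle directly: granting (which should be accessible at least for $p>3$) that a three-dimensional Lie $p$-algebra without two-dimensional subalgebras is a non-split form of the three-dimensional simple algebra, such an $I$ is simple and centreless, $L=I\rtimes Kz$ with the action of $z$ a derivation of $I$; as all derivations of $I$ are inner, $z$ may be corrected by an element of $I$ to an element centralizing $I$, and a nonzero element centralizing $I$ would span a two-dimensional abelian subalgebra with any nonzero element of $I$, forcing $z\in I$ and $\dim L=3$, a contradiction. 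For $L$ itself simple of dimension $\ge 4$, a descent argument shows $L$ is absolutely simple: the radical of $L\otimes_K\bar K$ and the sum of all its minimal ideals are stable under $\mathrm{Gal}(\bar K/K)$, hence $K$-rational, so simplicity of $L$ makes the former vanish and the latter exhaust $L\otimes_K\bar K$; the minimal ideals are then simple and permuted transitively by the Galois group, and were there more than one, the centralizer in $L\otimes_K\bar K$ of any nonzero $x\in L$ would have dimension at least their number, contradicting $C_L(x)=Kx$. Since Cartan subalgebras are $K$-rational and a nilpotent Lie algebra of dimension $\ge 2$ contains a two-dimensional abelian subalgebra, the Cartan subalgebras of $L$, and hence of $L\otimes_K\bar K$, are one-dimensional.

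Thus the finite-dimensional case is reduced to the following statement, where the real difficulty lies: \emph{a finite-dimensional simple Lie $p$-algebra over an algebraically closed field with a one-dimensional Cartan subalgebra is three-dimensional — provided its forms inherit the condition that no $\ad x$ has a nonzero eigenvalue in $K$.} For classical simple Lie $p$-algebras this is just the assertion that the rank is one; but simple Lie algebras of Cartan (or exceptional) type can carry a one-dimensional Cartan subalgebra while being of larger dimension — the Witt algebra $W(1;\underline 1)$ is such, and it is excluded only because it, and presumably all its forms, already contain a two-dimensional subalgebra owing to a rational $\ad$-eigenvalue. Ruling these out in general seems to require the classification of finite-dimensional simple modular Lie algebras, which is available only for $p>3$ and is silent in characteristics $2$ and $3$, exactly where the problem is most delicate; and behaviour of forms over imperfect or otherwise non-closed fields adds a further layer. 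A more self-contained route might be to manufacture from $\lambda(\cdot)$ an invariant ``anisotropic'' symmetric bilinear form on $L$ and classify Lie $p$-algebras carrying such a form, but I do not see how to make this work. Finally, nothing above addresses infinite-dimensional $L$: the codimension-one-ideal reduction need not terminate, and passage to $\bar K$ is useless since $L\otimes_K\bar K$ can acquire two-dimensional subalgebras, as already happens for the non-split three-dimensional simple algebra. Showing directly that the $\mathbb{Z}/p$-grading afforded over $\bar K$ by a toral element, with its zero component one-dimensional, is incompatible with infinite dimension would be the second principal obstacle.
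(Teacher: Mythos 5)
This statement is Conjecture \ref{conj-3} of the paper: it is stated there \emph{without proof}, and the surrounding discussion records exactly what is known (the finite-dimensional case over a perfect field of characteristic $\ne 2,3$, via the classification of simple modular Lie algebras plus Galois-cohomological determination of forms) while explicitly flagging imperfect fields, characteristics $2$ and $3$, and the infinite-dimensional case as open. Your text is, by your own admission, a plan rather than a proof, and it terminates at precisely those open points, so it cannot be accepted as a proof. Concretely: (a) the final reduction appeals to the classification of finite-dimensional simple modular Lie algebras, which is unavailable exactly in characteristics $2$ and $3$; (b) the descent step (``the radical and the sum of minimal ideals of $L\otimes_K\bar K$ are Galois-stable, hence $K$-rational'') is valid only when $K$ is perfect --- over an imperfect field the fixed field of $\mathrm{Aut}(\bar K/K)$ is the perfect closure of $K$, not $K$ itself, so your absolute-simplicity argument breaks down in one of the very cases the conjecture targets, as the paper itself warns when it says the Galois-cohomological machinery is not available there; (c) nothing at all is offered for infinite-dimensional $L$, where the method of Lemma \ref{lemma-2dim} (which needs algebraic closedness) gives no purchase. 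Even the case you do reach --- finite-dimensional, perfect field, $p>3$ --- is essentially the partial result the paper already cites in the paragraph following the conjecture, not new ground.

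The elementary preliminaries are sound and worth keeping: $x^{[p]}=\lambda(x)x$; $\lambda(x)\ne 0$ for $x\ne 0$ once $\dim L>1$ (via Lemma \ref{cor-nilp}); $C_L(x)=Kx$; and separability of $T^p-\lambda(x)T$ giving $L=Kx\oplus[L,x]$ with $[L,x]$ of codimension one and $\ad x$ without eigenvalues in $K$ on it. But you then under-use them: a nonzero ideal contains some $x\ne 0$ \emph{and} $[L,x]$, hence equals $Kx\oplus[L,x]=L$, so $L$ is automatically simple whenever $\dim L>1$; your codimension-one-ideal induction (including the unproved auxiliary claim about $3$-dimensional algebras for $p>3$) is therefore vacuous. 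This tightening does not change the verdict: the hard core --- excluding higher-dimensional ``anisotropic'' simple Lie $p$-algebras over non-closed, in particular imperfect, fields and in characteristics $2,3$, and handling infinite dimension --- is untouched, which is exactly why the statement remains a conjecture in the paper.
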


It is an intriguing open question whether there exist infinite-dimensional Lie 
algebras all whose proper subalgebras are one-dimensional (Lie-algebraic analogs
of Tarski's monsters in group theory constructed by Olshanskii). 
Conjecture \ref{conj-3} implies that there are no such algebras in the class of
Lie $p$-algebras.

\section{
Lie $p$-algebras of restricted cohomological dimension one
}\label{sec-p}

When speaking about cohomological dimension, we consider the category of 
\emph{all} Lie algebra modules, including infinite-dimensional ones. If we 
restrict ourselves to, say, finite-dimensional Lie algebras and the category 
of finite-dimensional modules, the whole subject, both in results and methods 
employed, becomes quite different. In fact, we cannot longer speak about 
cohomological dimension, as vanishing of the cohomology in a given degree does 
not imply vanishing in higher degrees. A sample of results in this domain: in 
characteristic zero, an ``almost'' converse of the classical Whitehead Lemmas 
holds (\cite{z1}, \cite{z2}), and in positive characteristic, the existence of a
finite-dimensional module with non-vanishing cohomology in any degree not 
greater than the dimension of the algebra, which was already mentioned in the 
proof of Lemma \ref{lemma-cd}.

Still, instead of the category of all modules with ordinary cohomology, we can 
consider a smaller subcategory of modules with a good-behaving cohomology 
theory: for example, the category of all restricted modules with restricted 
cohomology. Recall that for a Lie $p$-algebra $L$, and a bimodule $M$ over its 
restricted universal enveloping algebra $u(L)$, we have 
\begin{equation}\label{eq-isom}
\Homol^n_*(L, M^{\ad}) \simeq \HH^n(u(L), M) ,
\end{equation}
where $\Homol_*$ and $\HH$ stand for the restricted cohomology of a Lie 
$p$-algebra, and the Hochschild cohomology of an associative algebra, 
respectively, and $M^{\ad}$ is the restricted $L$-module structure on $M$ 
defined via $x \bullet m = xm - mx$ for $x\in L$, $m \in M$.

The definition of the \emph{restricted cohomological dimension} of $L$ 
(notation: $\cd_*(L)$) repeats the definition of the ordinary cohomological 
dimension, with projective resolutions (\ref{eq-resol}) considered in the 
category of left (or right) modules over $u(L)$.

As in the unrestricted case, Shapiro's lemma for restricted cohomology implies
that the restricted cohomological dimension does not increase when passing to
subalgebras. In particular, a subalgebra of a Lie $p$-algebra of restricted 
cohomological dimension one is of restricted cohomological dimension one or 
zero. A free Lie $p$-algebra has restricted cohomological dimension one. 

Recall that an element $x$ of a Lie $p$-algebra $L$ is called \emph{semisimple},
if $x$ is a linear combination of its $p$-powers $x^{[p]^k}$, $k=1,2,\dots$. If
$L$ is a finite-dimensional \emph{torus}, i.e., an abelian Lie $p$-algebra 
consisting of semisimple elements, then $u(L)$ is a commutative semisimple 
algebra, and hence $\cd_*(L) = 0$. Conversely, the main theorem of 
\cite{hochsch} (see also \cite[Satz 10]{strade} and 
\cite[Theorem 3.1]{farns-coho}) amounts to saying (in a different terminology) 
that any finite-dimensional Lie $p$-algebra of restricted cohomological 
dimension zero is a torus. Moreover, according to \cite[Theorem 9.2.11]{weibel},
restricted cohomological dimension zero implies finite-dimensionality, so 
Lie $p$-algebras of restricted cohomological dimension zero are exactly 
finite-dimensional tori.

The existence of nontrivial Lie $p$-algebras of restricted cohomological 
dimension zero makes the situation somewhat similar to the associative one: due
to the classical results of Eilenberg, Hochschild, Rosenberg, Zelinsky, and 
others obtained in the 1940--1950s, it is known that associative algebras of 
cohomological dimension zero are exactly finite-dimensional separable algebras 
(see, for example, \cite[Theorem 9.2.11]{weibel}). As for associative algebras 
of cohomological dimension one, the multitude of examples of such algebras 
seemingly defies any attempt to classify them (see \cite{cuntz-quillen}). One 
may hope that the Lie $p$-algebraic situation is more tame (see 
Conjecture \ref{conj} below).

Also, similarly to the associative case, the existence of nontrivial Lie 
$p$-algebras of restricted cohomological dimension zero allows, by the extension
procedure, to get new Lie $p$-algebras of restricted cohomological dimension one
from already known ones. 

\begin{lemma}\label{lemma-ext}
Let $I$ be a $p$-ideal of a Lie $p$-algebra $L$. Then:
\begin{enumerate}[\upshape(i)]
\item $\cd_*(L) \le \cd_*(I) + \cd_*(L/I)$;
\item if $\cd_*(I) = 0$, then $\cd_*(L) = \cd_*(L/I) $;
\item if $\cd_*(L/I) = 0$, then $\cd_*(L) = \cd_*(I) $.
\end{enumerate}
\end{lemma}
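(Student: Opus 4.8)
The plan is to deduce all three statements from a Lyndon--Hochschild--Serre (LHS) spectral sequence for restricted cohomology. Since $I$ is a $p$-ideal, the restricted PBW theorem shows that $u(L)$ is free (hence faithfully flat) as a left $u(I)$-module, and $u(L)\otimes_{u(I)}K\cong u(L/I)$ as algebras; thus $u(I)$ is a normal Hopf subalgebra of the cocommutative Hopf algebra $u(L)$ with quotient $u(L/I)$. The usual LHS construction for such a Hopf-algebra extension then provides, for every restricted $L$-module $M$, a first-quadrant spectral sequence
\begin{equation*}
E_2^{s,t}=\Homol^s_*\big(L/I,\,\Homol^t_*(I,M)\big)\;\Longrightarrow\;\Homol^{s+t}_*(L,M),
\end{equation*}
in which $\Homol^t_*(I,M)$ carries its natural structure of restricted $L/I$-module. (Equivalently, via the isomorphism~(\ref{eq-isom}) one may phrase this as a change-of-rings spectral sequence for $u(I)\hookrightarrow u(L)\twoheadrightarrow u(L/I)$ in Hochschild cohomology.) The first step is to pin down this spectral sequence precisely in the restricted category; this is really the only nonformal input.

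Granting it, (i) is immediate: if $\cd_*(I)$ or $\cd_*(L/I)$ is infinite there is nothing to prove, and otherwise $E_2^{s,t}$ vanishes unless $t\le\cd_*(I)$ (because $M$ restricted to $I$ is a restricted $I$-module) and $s\le\cd_*(L/I)$, so $E_\infty^{s,t}=0$ for $s+t>\cd_*(I)+\cd_*(L/I)$; hence $\Homol^n_*(L,M)=0$ for every such $n$ and every $M$, i.e.\ $\cd_*(L)\le\cd_*(I)+\cd_*(L/I)$, a fortiori the bound claimed in (i).

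For (ii), if $\cd_*(I)=0$ then $\Homol^t_*(I,M)=0$ for all $t\ge 1$, the spectral sequence collapses to an isomorphism $\Homol^s_*(L,M)\cong\Homol^s_*(L/I,M^I)$ where $M^I=\Homol^0_*(I,M)$ is the module of $I$-invariants, and therefore $\cd_*(L)\le\cd_*(L/I)$; applying the same isomorphism to a module inflated from $L/I$ (so that $M^I=M$) that realizes the top --- or arbitrarily high --- restricted cohomology of $L/I$ gives the reverse inequality. For (iii), if $\cd_*(L/I)=0$ then $\Homol^s_*(L/I,-)$ vanishes for $s\ge 1$ on every restricted $L/I$-module, the spectral sequence collapses the other way to $\Homol^t_*(L,M)\cong\Homol^t_*(I,M)^{L/I}$, whence $\cd_*(L)\le\cd_*(I)$; the reverse inequality $\cd_*(I)\le\cd_*(L)$ is the subalgebra monotonicity of $\cd_*$ recorded above (Shapiro's lemma).

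The main obstacle is thus the foundational one: making sure the LHS spectral sequence is available in exactly the restricted form stated. Concretely one must check (a) that $u(L)$ is free over $u(I)$ for a $p$-ideal $I$, including the infinite-dimensional case (restricted PBW); (b) that $u(L)\otimes_{u(I)}K\cong u(L/I)$ as algebras; and (c) that the $E_2$-page is genuinely built from the \emph{restricted} cohomology of $L/I$ with coefficients in the \emph{restricted} cohomology of $I$, and not from ordinary Chevalley--Eilenberg cohomology or some hybrid of the two theories. A minor secondary point is the bookkeeping in the ``$\ge$'' directions of (ii) and (iii): given a high-degree class over the quotient (resp.\ the ideal) one must exhibit a restricted $L$-module carrying a class in the same degree, which is done by inflation together with the collapse of the spectral sequence (resp.\ by coinduction together with Shapiro's lemma).
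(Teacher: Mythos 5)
Your argument is essentially the paper's own proof: both rest on the Lyndon--Hochschild--Serre spectral sequence for the extension $I \to L \to L/I$ (the paper states it in Hochschild-cohomology form for $u(I)\subset u(L)$ via the isomorphism (\ref{eq-isom}), you state it directly for restricted cohomology), with the same collapse arguments for (ii) and (iii), inflation for the reverse inequality in (ii), and Shapiro/subalgebra monotonicity for the reverse inequality in (iii). Your vanishing count for (i) even yields the sharper bound $\cd_*(L)\le\cd_*(I)+\cd_*(L/I)$, which implies the stated inequality with the ``$+1$'', so the proposal is correct.
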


Part (i) is a restricted analogue of \cite[Theorem 3.11.9]{bokut-kukin}.

\begin{proof}
This follows immediately from the Lyndon--Hochschild--Serre spectral sequence
converging to $\HH^{s+t}(u(L),M)$ and having the $\E_2$ term
$$
\E_2^{st} = \HH^s(u(L/I), \HH^t(u(I),M))
$$
(here $M$ is an arbitrary $u(L)$-module). If $\HH^t(u(I),M) = 0$ for any 
$t > n$, and $\HH^s(u(L/I),M) = 0$ for any $s > m$, then $\E_2^{st} = 0$ for
any $s + t > n+m$, and hence $\HH^i(u(L),M) = 0$ for any $i > n+m$, what proves
(i).

To prove (ii), note that $\cd_*(I) = 0$ implies that the only non-vanishing 
$\E_2$ terms are $\E_2^{s0}$, the spectral sequence stabilizes at $\E_2$, and
$\HH^n(u(L),M) \simeq \E_2^{n0} = \HH^n(u(L/I), M^I)$. Since any restricted 
$L/I$-module can be lifted to a restricted $L$-module by letting $I$ act 
trivially, the desired equality follows.

Part (iii) follows from (i): $\cd_*(L/I) = 0$ implies $\cd_*(L) \le \cd_*(I)$. 
On the other hand, $\cd_*(I) \le \cd_*(L)$, and the desired equality follows.
\end{proof}

Parts (ii) and (iii) of Lemma \ref{lemma-ext} show in particular, that extending
a Lie $p$-algebra of restricted cohomological dimension zero by a Lie 
$p$-algebra of restricted cohomological dimension one, or, vice versa, extending
a Lie $p$-algebra of restricted cohomological dimension one by a Lie $p$-algebra of 
restricted cohomological dimension zero, we get a Lie $p$-algebra of 
cohomological dimension one. As any extension of a Lie $p$-algebra of restricted
cohomological dimension $\le 1$ splits, any algebra which can be obtained 
starting from a free Lie $p$-algebra by successively applying such extensions, 
has the following form:
\begin{equation}\label{eq-iter}
( \dots ((\mathscr L \bowtie T_1) \bowtie T_2) \dots ) \bowtie T_n ,
\end{equation}
where $\mathscr L$ is a free Lie $p$-algebra, $T_1, \dots, T_n$ are 
finite-dimensional tori, and each symbol $\bowtie$ stands either for 
$\rtimes$ (action of the left-hand side on the right-hand side), or for 
$\ltimes$ (action of the right-hand side on the left-hand side).

\begin{conjecture}\label{conj}
Any Lie $p$-algebra of restricted cohomological dimension one is of the form 
(\ref{eq-iter}).
\end{conjecture}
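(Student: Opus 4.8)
The plan is to attack this conjecture in two stages. The first is a structural reduction: peel off from $L$, one at a time, its finite-dimensional toral $p$-ideals and its finite-dimensional toral quotients, using Lemma~\ref{lemma-ext} together with the splitting of the resulting extensions, until one reaches a Lie $p$-algebra with no nonzero finite-dimensional toral $p$-ideal and no finite-dimensional toral quotient (call such an algebra \emph{$p$-reduced}). The second --- the crux --- is to prove that a $p$-reduced Lie $p$-algebra of restricted cohomological dimension one is free; this is a Lie $p$-algebra analogue of the Stallings--Swan theorem, and I expect essentially all of the difficulty to sit there.

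\textbf{Stage 1.} One first notes that $\cd_*(L) = 1$ forces $L$ to be infinite-dimensional: were $L$ finite-dimensional, $u(L)$ would be a finite-dimensional self-injective algebra of finite global dimension, hence semisimple, so that $\cd_*(L) = 0$ and $L$ is a finite-dimensional torus by the Hochschild--Weibel characterization recalled in \S\ref{sec-p}. Next, if $T$ is a nonzero toral $p$-ideal, then $\cd_*(T) = 0$, so Lemma~\ref{lemma-ext}(ii) gives $\cd_*(L/T) = 1$, and the extension $0 \to T \to L \to L/T \to 0$ splits because extensions with the fixed abelian kernel $T$ are classified by $\Homol^2_*(L/T, T) = 0$; thus $L = (L/T) \rtimes T$ in the notation of \S\ref{sec-p}, with the torus on the right. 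Dually, if $L/I$ is a nonzero toral quotient, then $\cd_*(I) = 1$ by Lemma~\ref{lemma-ext}(iii); since $\cd_*(L/I) = 0$ kills $\Homol^{\ge 1}_*(L/I, -)$, and hence every obstruction in the extension theory of $L/I$ by the $p$-ideal $I$, that extension splits as well, giving $L = I \ltimes (L/I)$, again with the torus on the right. Alternating these two moves reproduces, provided it terminates, precisely an algebra of the form~(\ref{eq-iter}); making the process well-founded --- i.e., bounding the ``toral depth'' of $L$ --- is the one point of Stage 1 that requires genuine work, since a priori there is no reason the alternating chain of peeled toral ideals and quotients should be finite, and such a bound has to be extracted from $\cd_*(L) = 1$.

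\textbf{Stage 2.} Fix a $p$-reduced $L$ with $\cd_*(L) = 1$. The homological content of $\cd_*(L) \le 1$ is that the augmentation ideal $\omega_*(L) \subset u(L)$ is projective as a right $u(L)$-module, whereas for a free Lie $p$-algebra $\mathscr F$ on a set $X$ one has $u(\mathscr F) = K\langle X\rangle$ and $\omega_*(\mathscr F) = \bigoplus_{x \in X} x \cdot u(\mathscr F)$, which is \emph{free} of rank $|X|$; so the goal is to upgrade ``projective'' to ``free of the expected rank'' and to exhibit a free generating set inside $L$. Concretely, pick $X \subset L$ projecting to a basis of $L/([L,L] + L^{[p]})$, let $\mathscr F$ be the free Lie $p$-algebra on $X$, and set $R = \ker(\pi\colon \mathscr F \twoheadrightarrow L)$ --- here one must verify, and this is where $p$-reducedness should enter, that $X$ really generates $L$, so that $\pi$ is onto. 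Since $X$ projects to a basis of $L/([L,L]+L^{[p]})$, the induced map $\Homol^1_*(L, K) \to \Homol^1_*(\mathscr F, K)$ is an isomorphism, so in the five-term exact sequence in restricted (co)homology attached to $0 \to R \to \mathscr F \to L \to 0$ the term $\Homol^1_*(R, K)^{L}$ injects into $\Homol^2_*(L, K)$; but $\Homol^2_*(L, K) = 0$ as $\cd_*(L) \le 1$, whence $\Homol^1_*(R, K)^{L} = 0$, which translates to $R = [\mathscr F, R] + R^{[p]}$. Equivalently, in the restricted Gruenberg-type sequence $0 \to \overline R \to F \to \omega_*(L) \to 0$ with $F$ free over $u(L)$, the relation module $\overline R$ satisfies $\overline R = \omega_*(L) \cdot \overline R$; and since $\omega_*(L)$ is projective this sequence splits, so $\overline R$ is itself projective. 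One then wants the implication ``$\overline R$ projective and $\overline R = \omega_*(L) \cdot \overline R$ together imply $\overline R = 0$'', which would give $R = 0$ and $L = \mathscr F$.

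\textbf{Main obstacle.} That final implication is where the real depth lies, and it is not formal: $u(L)$ is not a local ring (for infinite-dimensional $L$ it is not even finite-dimensional), so Nakayama's lemma does not apply directly --- the same obstruction that makes the Stallings--Swan theorem hard for groups, where $\mathbb{Z}G$ is likewise not local. Moreover the unrestricted $p$-analogue of the statement is \emph{false}, by the Mikhalev--Umirbaev--Zolotykh example \cite{muz}, so the proof must use the $p$-structure essentially. The most promising route seems to be to exploit that $u$ is finite-dimensional and self-injective (Frobenius) on every finite-dimensional $p$-subalgebra, so that $u(L) = \varinjlim u(L_\alpha)$ is a directed colimit of finite-dimensional Frobenius algebras: after reducing to finitely generated $L$ (in the spirit of Feldman's \cite{feldman} treatment of the $2$-generated case) the relation module $\overline R$ becomes finitely generated projective and descends, together with its relation, to a finite stage $u(L_\alpha)$ by the faithful flatness of $u(L)$ over $u(L_\alpha)$, so that one is reduced to showing that a finitely generated projective $u(L_\alpha)$-module killed modulo the augmentation ideal must vanish. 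Carrying out this finite-dimensional step --- presumably by using $p$-reducedness to constrain the subalgebras $L_\alpha$ --- together with the termination of the peeling in Stage 1, are the two points where I expect the genuine work to lie.
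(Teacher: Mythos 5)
The statement you are proving is a \emph{conjecture}: the paper offers no proof of it, only the surrounding ``limited evidence'' (Lemma~\ref{lemma-fin}, Propositions on abelian subalgebras, etc.), so there is no argument of the author's to compare yours against, and any submission here would have to be a complete proof of an open problem. Yours is not: it is a program, and the two places you yourself flag as ``genuine work'' are precisely where the whole difficulty of the conjecture sits, so nothing has actually been proved. Concretely: in Stage~1 you have no mechanism for bounding the toral depth, i.e.\ for showing the alternating peeling of finite-dimensional toral ideals and quotients terminates after finitely many steps (note also that Lemma~\ref{lemma-ext} only lets you peel \emph{finite-dimensional} tori, since an infinite-dimensional torus has $\cd_*>0$ by the Weibel result quoted in \S\ref{sec-p}; so ``$p$-reduced'' in your sense does not exclude, say, an infinite ascending union of toral ideals, and it is not clear such an $L$ must be free, as your Stage~2 would require). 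In Stage~2, two steps are asserted without proof: that a set $X$ projecting onto a basis of $L/([L,L]+L^{[p]})$ generates $L$ (a Frattini-type statement that fails for general infinite-dimensional algebras and for which ``$p$-reducedness'' gives no visible leverage), and, crucially, the implication ``$\overline R$ projective and $\overline R=\omega_*(L)\cdot\overline R$ imply $\overline R=0$.'' You correctly observe that Nakayama does not apply and that this is the Stallings--Swan-level obstruction; but your proposed remedy (descend to a finite-dimensional $p$-subalgebra $L_\alpha$ via the colimit $u(L)=\varinjlim u(L_\alpha)$ and use that $u(L_\alpha)$ is Frobenius) does not obviously work either, because the condition $\overline R=\omega_*(L)\cdot\overline R$ involves the augmentation ideal of the \emph{whole} algebra and does not descend to the analogous condition over $u(L_\alpha)$; moreover ``every finitely generated subalgebra is of the form (\ref{eq-iter})'' would not by itself yield the global statement. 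In short, the five-term-sequence computation $\Homol^1_*(R,K)^L=0$, hence $R=[\mathscr F,R]+R^{[p]}$, is fine as far as it goes, but the proposal stops exactly at the point where the conjecture begins.
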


In particular, this conjecture implies that a Lie $p$-algebra of restricted 
cohomological one has a free Lie $p$-subalgebra of finite codimension.

Let us establish some facts about Lie $p$-algebras of restricted cohomological
dimension one, providing a (limited) evidence in support of the conjecture.

The following fact was established in \cite[Theorem 5.1]{chwe-tams} using a
not entirely trivial result from homological algebra due to Kaplansky, and 
independently in \cite[Remark 2]{berkson}, using previous results of Eilenberg 
and Nakayama. We give an alternative, and arguably a more elementary proof -- a
mere reformulation of known (and easy) results on the cohomology of commutative 
associative algebras.

\begin{lemma}\label{lemma-fin}
The restricted cohomological dimension of a finite-dimensional Lie $p$-algebra 
is either zero or infinity.
\end{lemma}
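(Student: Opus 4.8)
The plan is to translate the statement about restricted cohomological dimension into a statement about a commutative associative algebra, namely $u(L)$, and then invoke the well-understood structure theory of finite-dimensional commutative algebras. First I would recall that for a finite-dimensional Lie $p$-algebra $L$, the restricted universal enveloping algebra $u(L)$ is a finite-dimensional associative algebra of dimension $p^{\dim L}$, and, crucially, that the restricted cohomology $\Homol^\bullet_*(L,-)$ coincides (via Shapiro-type arguments, or directly via $\Ext$ over $u(L)$) with $\Ext^\bullet_{u(L)}(K,-)$, so that $\cd_*(L)$ equals the projective dimension of the trivial module $K$ over the finite-dimensional algebra $u(L)$. The isomorphism (\ref{eq-isom}) and the remarks following it already situate us in exactly this setting.

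Next I would observe the dichotomy at the level of $u(L)$. If $\cd_*(L) = 0$, we are in the first alternative and there is nothing to prove. So suppose $\cd_*(L) \geq 1$; I claim it is then infinite. The key step is this: a finite-dimensional associative algebra $A$ in which the trivial module $K$ has finite projective dimension $\geq 1$ cannot exist when $A$ is, roughly speaking, ``local-like'' and self-injective — and $u(L)$ has both features. Concretely, $u(L)$ is a finite-dimensional cocommutative Hopf algebra, hence a Frobenius algebra (indeed a symmetric algebra over a field), so it is self-injective: every projective $u(L)$-module is injective and vice versa. For such an algebra, a module of finite projective dimension must in fact be projective (if $0 \to P_n \to \cdots \to P_0 \to M \to 0$ is a minimal projective resolution with $n$ minimal, then $P_n \hookrightarrow P_{n-1}$ is an injection of an injective module, hence split, contradicting minimality unless $n=0$). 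Therefore $\cd_*(L) \geq 1$ forces $K$ to be a non-projective $u(L)$-module whose projective dimension is then necessarily infinite.

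The only remaining point is to rule out the possibility that $K$ is a non-projective module of \emph{finite} projective dimension, which the self-injectivity argument above already handles, so the dichotomy ``zero or infinity'' follows. I would present the self-injectivity of $u(L)$ as the technical heart — it is standard (finite-dimensional Hopf algebras are Frobenius, by a theorem of Larson–Sweedler, and cocommutative ones are symmetric), but it is the one external fact the argument rests on, replacing the appeal to Kaplansky's homological lemma used in \cite{chwe-tams}. The main obstacle is purely expository: making precise that restricted cohomology of $L$ is genuinely computed by $\Ext_{u(L)}(K,-)$ and that ``restricted projective dimension'' of $K$ agrees with its projective dimension over the finite-dimensional algebra $u(L)$, so that the syzygy/minimal-resolution argument applies verbatim. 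Once that identification is in place, everything reduces to the elementary fact that over a self-injective finite-dimensional algebra, finite projective dimension implies projective dimension zero.
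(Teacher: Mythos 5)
Your argument is correct, but it takes a genuinely different route from the paper. The paper's proof is deliberately elementary and local: it picks a non-toral element $x$, identifies $u((x)_p)$ with $K[x]/(f)$ for a $p$-polynomial $f$ (so $f'=0$), reads off from the standard periodic resolution that the Hochschild cohomology of $K[x]/(f)$ is nonzero in every degree, transfers this to $\Homol^\bullet_*((x)_p,K)$ via (\ref{eq-isom}), and concludes by monotonicity of $\cd_*$ under passage to $p$-subalgebras; as a by-product it exhibits nonvanishing restricted cohomology in all degrees already for a one-generated $p$-subalgebra. You instead identify $\cd_*(L)$ with the projective dimension of the trivial module over the finite-dimensional algebra $u(L)$, invoke that $u(L)$ is Frobenius, hence self-injective (Larson--Sweedler for finite-dimensional Hopf algebras, or Berkson's theorem for $u(L)$ directly), and use the general fact that over a self-injective algebra a module of finite projective dimension is projective. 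This is shorter and more conceptual, applies verbatim to any finite-dimensional Hopf algebra, and is closer in spirit to Chwe's original proof in \cite{chwe-tams} (which rests on a homological result of Kaplansky) -- precisely the kind of external input the paper set out to avoid; what you lose is the explicit computation showing \emph{where} the cohomology lives. Two small corrections: the parenthetical claim that cocommutative finite-dimensional Hopf algebras are \emph{symmetric} is not true in general (for $u(L)$ symmetry is equivalent to $L$ being unimodular, i.e.\ all $\mathrm{tr}(\ad x)=0$), but you only need the Frobenius/self-injective property, so nothing is harmed; and the ``local-like'' hypothesis you mention is likewise unnecessary -- self-injectivity alone runs the syzygy argument, either via minimal resolutions or simply by splitting off the last term to shorten a resolution of minimal length.
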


\begin{proof}
Let $L$ be a finite-dimensional Lie algebra of restricted cohomological 
dimension $>0$, i.e. not a torus. Since $L$ is not a torus, there is $x\in L$ 
satisfying the relation  of the form
\begin{equation}\label{eq-f}
\lambda_1 x^{[p]} + \lambda_2 x^{[p]^2} + \dots + \lambda_n x^{[p]^n} = 0
\end{equation}
for some $n \ge 1$ and $\lambda_1, \lambda_2, \dots, \lambda_n \in K$, 
$\lambda_n \ne 0$. For the $p$-subalgebra $(x)_p$ generated by $x$, we have
$u((x)_p) \simeq K[x]/(f)$, where the polynomial $f$ is obtained from the 
left-hand side of (\ref{eq-f}) by replacing $p$-powers in the Lie $p$-algebra by
the ordinary $p$-powers in the polynomial algebra: 
$f(t) = \lambda_1 t^p + \lambda_2 t^{p^2} + \dots + \lambda_n t^{p^n}$.
 
The Hochschild cohomology of quotients of polynomial algebras is well 
understood -- see, for example, \cite{holm} and references therein. In 
particular, in \cite[Proposition 2.2]{holm} a periodic free resolution of such algebras is 
constructed, from which it follows that the complex computing the Hochschild
cohomology of $K[x]/(f)$ is of the form
$$
K[x]/(f) \overset{0}{\longrightarrow} 
K[x]/(f) \overset{f^\prime}{\longrightarrow} 
K[x]/(f) \overset{0}{\longrightarrow} 
K[x]/(f) \overset{f^\prime}{\longrightarrow} 
\dots
$$
Since $f^\prime$ (the formal derivative of $f$) vanishes, 
$\HH^n(K[x]/(f), K[x]/(f))$ does not vanish for any $n$. As $K[x]/(f)$ is 
commutative, $K[x]/(f)^{\ad}$, as an $(x)_p$-module, is the direct sum of $p^n$
copies of the trivial $(x)_p$-module $K$, and due to the isomorphism 
(\ref{eq-isom}), $\Homol^n_*((x)_p,K)$ is nonzero for any $n$. Consequently, the
restricted cohomological dimension of $(x)_p$, and thus of $L$, is equal to 
infinity.
\end{proof}

\begin{proposition}\label{prop-1}
A $p$-subalgebra of a Lie $p$-algebra of finite restricted cohomological 
dimension is either a torus, or is infinite-dimensional.
\end{proposition}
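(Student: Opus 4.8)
The plan is to reduce this statement directly to Lemma~\ref{lemma-fin} by exploiting the fact that restricted cohomological dimension is monotone under passing to $p$-subalgebras. Let $L$ be a Lie $p$-algebra with $\cd_*(L) = n < \infty$, and let $S$ be a $p$-subalgebra of $L$. By the Shapiro lemma for restricted cohomology (discussed in \S\ref{sec-p}), $\cd_*(S) \le \cd_*(L) = n < \infty$, so $S$ itself has finite restricted cohomological dimension. The assertion to be proved is that $S$ is then either a torus or infinite-dimensional; equivalently, a finite-dimensional $p$-subalgebra of finite restricted cohomological dimension must be a torus.

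So I would first reduce to the finite-dimensional case: assume $S$ is finite-dimensional (if not, there is nothing to prove). Then $\cd_*(S)$ is finite, and by Lemma~\ref{lemma-fin} the restricted cohomological dimension of a finite-dimensional Lie $p$-algebra is either zero or infinity. Since it is finite, it must be zero. Finally, invoke the characterization (recorded just before Lemma~\ref{lemma-ext}, via the results of Hochschild et al.) that a finite-dimensional Lie $p$-algebra of restricted cohomological dimension zero is precisely a torus. This gives that $S$ is a torus, completing the argument.

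The proof is essentially a two-line chaining of already-established facts: monotonicity of $\cd_*$ under $p$-subalgebras, the zero-or-infinity dichotomy of Lemma~\ref{lemma-fin}, and the identification of the dimension-zero case with tori. The only point requiring a modicum of care is the logical packaging: one must phrase the dichotomy on $S$ (a priori only known to be a $p$-subalgebra, so we split on whether it is finite-dimensional), rather than trying to apply Lemma~\ref{lemma-fin} to $L$ itself, which need not be finite-dimensional. There is no genuine obstacle here; the content of the proposition is entirely front-loaded into Lemma~\ref{lemma-fin}, and this statement merely transports that dichotomy along subalgebra inclusions using Shapiro's lemma. I would write the proof in three short sentences corresponding to the three cited inputs.
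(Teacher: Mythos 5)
Your proof is correct and is exactly the argument the paper intends by its one-line ``Follows from Lemma \ref{lemma-fin}'': monotonicity of $\cd_*$ under passing to $p$-subalgebras via the restricted Shapiro lemma, the zero-or-infinity dichotomy for finite-dimensional algebras, and the identification of restricted cohomological dimension zero with tori. No differences to report.
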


\begin{proof}
Follows from Lemma \ref{lemma-fin}.
\end{proof}

In particular, in a Lie $p$-algebra $L$ of finite restricted cohomological 
dimension, every nonzero $p$-algebraic element (i.e., on which a certain 
$p$-polynomial vanishes) is $p$-semisimple. This can be considered as a 
Lie-$p$-algebraic analog of the well known fact that groups of finite 
cohomological dimension are torsion-free (see, for example, 
\cite[p.~6, Corollary 2]{lecturenotes}).

\begin{proposition}
An abelian $p$-subalgebra of a Lie $p$-algebra of restricted cohomological
dimension one is either a torus, or is isomorphic to the direct sum of a torus 
and the free Lie $p$-algebra of rank one.
\end{proposition}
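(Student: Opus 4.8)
The plan is to split off the maximal toral part of $A$ and recognise the complement as the free Lie $p$-algebra of rank one. By Shapiro's lemma, $\cd_*(A)\le\cd_*(L)=1$. If $\cd_*(A)=0$, then $A$ is a finite-dimensional torus by the structure result recalled before Lemma~\ref{lemma-ext}, and we are in the first case; so assume $\cd_*(A)=1$. Since $A$ is abelian, the $p$-map $\varphi:=(\cdot)^{[p]}$ is $p$-semilinear, because the cross-terms $s_i(x,y)$ of Jacobson's formula are iterated Lie brackets and hence vanish: $\varphi(x+y)=\varphi(x)+\varphi(y)$ and $\varphi(\lambda x)=\lambda^{p}\varphi(x)$.

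First I would set $T:=\{x\in A : x \text{ lies in a finite-dimensional } \varphi\text{-invariant subspace}\}$, the set of $p$-algebraic elements. A sum of two such subspaces is again one, so $T$ is a $\varphi$-invariant subspace, hence a $p$-ideal; by the remark following Proposition~\ref{prop-1} every element of $T$ is $p$-semisimple, so $T$ is a torus and $\varphi|_T$ is bijective. Pushing Jacobson's formula through $A\to A/T$ and using bijectivity of $\varphi|_{T}$ one checks that $A/T$ is $p$-torsion-free. If $A=T$ we are done. Otherwise choose $x\in A\setminus T$: then $(x)_p=\langle x,\varphi x,\varphi^{2}x,\dots\rangle$ is the free Lie $p$-algebra of rank one, all of whose nonzero elements are $p$-transcendental, so $T\cap(x)_p=0$, and $T\oplus(x)_p$ is a $p$-subalgebra of $A$ with $u(T\oplus(x)_p)\cong u(T)[x]$, in which $x$ is a central nonzerodivisor. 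A change-of-rings argument then gives $\cd_*(T\oplus(x)_p)=\cd_*(T)+1$, and Shapiro's lemma forces this to be $\le 1$, so $\cd_*(T)=0$ and $T$ is finite-dimensional. Now Lemma~\ref{lemma-ext}(ii) yields $\cd_*(A/T)=\cd_*(A)=1$, and the extension $0\to T\to A\to A/T\to 0$, being central because $A$ is abelian, is classified by $\Homol^{2}_{*}(A/T,T)=0$; so it splits and $A\cong T\oplus(A/T)$.

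It would then remain to prove the crux: a $p$-torsion-free abelian Lie $p$-algebra $B$ with $\cd_*(B)=1$ is the free Lie $p$-algebra of rank one. The route I would take: $u(B)$ has global dimension one (for a Hopf algebra this equals $\cd_*$), hence is reduced, and it is the filtered colimit along injective maps of the algebras $u(B_0)$, with $B_0$ running over the finitely-$p$-generated sub-$p$-algebras of $B$; each $u(B_0)$ is a domain---indeed a polynomial ring when $K$ is perfect, the operator $\varphi$ on $B_0$ having no nonzero invariant subspace on which it is bijective---so $u(B)$ is a domain. Thus $\operatorname{Spec}u(B)$ is a regular, integral, one-dimensional affine group scheme whose coordinate ring is generated by its primitives, and I would argue this forces $u(B)\cong K[x]$, i.e.\ $B\cong(x)_p$. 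Alternatively, and more in the spirit of the rest of the paper, one should show directly that $\cd_*(B)=1$ forces $\dim_K B/\varphi(B)=1$ and $\bigcap_n\varphi^{n}(B)=0$, and then $B$ is freely generated as a $p$-algebra by any preimage of a basis of $B/\varphi(B)$.

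I expect this last step to be the main obstacle. Ruling out rank $\ge 2$ is immediate, since $u$ of the free rank-$r$ Lie $p$-algebra is $K[x_1,\dots,x_r]$, of global dimension $r$; the real difficulty is to exclude the ``wandering'' $p$-torsion-free algebras such as the two-sided shift $\bigoplus_{i\in\mathbb{Z}}Ke_i$ with $e_i^{[p]}=e_{i+1}$, whose restricted enveloping algebra is the non-discrete valuation domain $K[x^{1/p^{\infty}}]$, of global dimension $\ge 2$. Thus the hypothesis $\cd_*(B)=1$ has to be used essentially, not merely as an upper bound: over a perfect ground field one can lean on the structure theory of finitely generated modules over the twisted polynomial ring $K[\varphi]$, $\varphi\lambda=\lambda^{p}\varphi$; over imperfect fields the global-dimension-one hypothesis must be exploited by hand.
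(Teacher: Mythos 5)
Your skeleton is the same as the paper's: split off the ideal $T$ of $p$-algebraic (equivalently, by the remark after Proposition \ref{prop-1}, $p$-semisimple) elements, observe that the quotient has no nonzero $p$-algebraic elements and still has $\cd_*=1$ by Lemma \ref{lemma-ext}(ii), and split the extension; the surrounding details you supply (Shapiro, the change-of-rings bound forcing $\cd_*(T)=0$, the splitting) are reasonable and in places more explicit than the paper. But the statement you yourself isolate as the crux --- that a $p$-torsion-free abelian Lie $p$-algebra $B$ with $\cd_*(B)=1$ is the free Lie $p$-algebra of rank one --- is where the entire content of the Proposition sits, and you do not prove it; you only sketch two possible routes and declare the obstacle. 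That is a genuine gap. The paper's argument for this step is local and cohomological rather than structural: take two elements $x,y$ of $B$ and suppose they cannot be written as $p$-polynomials of a single element; since by Proposition \ref{prop-1} each of them generates a free rank-one $p$-subalgebra, the paper concludes that the $p$-subalgebra $S$ generated by $x,y$ has $u(S)\simeq K[x,y]$, whose second Hochschild cohomology is nonzero by Hochschild--Kostant--Rosenberg; transporting this through the isomorphism (\ref{eq-isom}) as in the proof of Lemma \ref{lemma-fin} gives $\Homol^2_*(S,K)\neq 0$, hence $\cd_*(B)\ge\cd_*(S)\ge 2$, a contradiction. So every pair of elements lies in a cyclic $p$-subalgebra, and freeness of rank one is deduced from that. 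No global analysis of $u(B)$, group schemes, or twisted polynomial rings are needed.

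Two remarks on the routes you sketch. The group-scheme route is not merely incomplete but fails over imperfect fields: for the abelian, $p$-torsion-free algebra spanned by $x^{[p]^i}$ and $y$ with $y^{[p]}=x+tx^{[p]}$, $t\notin K^p$, one has $u\simeq K[x,y]/(y^p-x-tx^p)$, a regular one-dimensional domain of global dimension one generated by primitive elements which is a nontrivial form of $\mathbb{G}_a$, not $K[x]$; so ``regular, integral, one-dimensional, generated by primitives'' does not force $u(B)\simeq K[x]$ (and this same example shows that the dichotomy ``common cyclic envelope or $u(S)\simeq K[x,y]$'' has to be handled with care over imperfect ground fields). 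Your worry about the two-sided shift concerns only the final passage from ``every pair lies in a cyclic $p$-subalgebra'' to ``free of rank one'' --- the one point the paper treats as immediate --- and your own observation (a strictly increasing union of cyclic $p$-subalgebras has non-noetherian, hence non-hereditary, restricted enveloping algebra, so $\cd_*\ge 2$) is how one closes it. But flagging the residual difficulty is not the same as supplying the main step: without the two-generator Hochschild-cohomology argument, or a complete substitute for it, the proof is not there.
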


\begin{proof}
Let $L$ be an abelian subalgebra of a Lie $p$-algebra of restricted 
cohomological dimension one. The restricted cohomological dimension of $L$ is 
either equal to zero, in which case $L$ is a torus, or is equal to one. In the 
latter case, assume first that $L$ has no nonzero $p$-algebraic elements.

To prove that $L$ is a free Lie $p$-algebra of rank one, it is enough to prove 
that any two commuting elements of $L$, say, $x$ and $y$, can be represented as
$p$-polynomials of a third element. Suppose the contrary. By 
Proposition \ref{prop-1}, each of $x$, $y$ generates the free Lie $p$-algebra of
rank one, and hence the restricted universal enveloping algebra of the 
$p$-subalgebra $S$ of $L$ generated by $x,y$, is isomorphic to the polynomial 
algebra in two variables $K[x,y]$. The latter algebra has non-vanishing 2nd 
Hochschild cohomology (for example, 
$\HH^2(K[x,y],K[x,y]) \simeq \bigwedge^2 (\Der(K[x,y])) \otimes_{K[x,y]} K$ by 
the Hochschild--Kostant--Rosenberg theorem), and by reasoning as at the end of 
the proof of Lemma \ref{lemma-fin}, we get that $\Homol^2_*(S,K)$ does not 
vanish, whence $\cd_*(L) \ge \cd_*(S) \ge 2$, a contradiction.

In the general case, consider the set $T$ of all $p$-semisimple elements of $L$.
Obviously, $T$ forms a proper subalgebra, and hence a proper ideal, of $L$. By 
Lemma \ref{lemma-ext}(ii), the quotient $L/T$ is an abelian Lie $p$-algebra of 
restricted cohomological dimension one. Since $L/T$ does not have nonzero
$p$-algebraic elements, $L/T$ is isomorphic to the free Lie $p$-algebra of rank
one by above. The extension obviously splits, and the desired conclusion 
follows.
\end{proof}

The next lemma shows that the (ordinary) cohomology of Lie $p$-algebras of 
restricted cohomological dimension one behaves in a rather peculiar way.

\begin{lemma}\label{prop-3}
Let $L$ be a Lie $p$-algebra of restricted cohomological dimension one, and $M$
a restricted $L$-module. Then
\begin{equation}\label{eq-ss}
\Homol^n(L,M) \simeq 
\Hom\Big(\bigwedge\nolimits^n L,\> M^L\Big) \oplus 
\Hom\Big(\bigwedge\nolimits^{n-1} L,\> \Homol^1_*(L,M)\Big)
\end{equation}
for any $n \ge 1$.
\end{lemma}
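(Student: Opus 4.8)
The plan is to compute the ordinary Chevalley--Eilenberg cohomology $\Homol^\bullet(L,M)$ by comparing it with the restricted cohomology $\Homol^\bullet_*(L,M)$, exploiting the fact that the two theories are linked by a long exact sequence (or, more conveniently, a spectral sequence) whose ``error terms'' involve exterior powers of $L$. Concretely, there is a standard relationship: the ordinary Chevalley--Eilenberg complex $\C^\bullet(L,M) = \Hom(\bigwedge^\bullet L, M)$ and the restricted complex are connected because the restricted cohomology governs extensions in the category of restricted modules, while the ordinary cohomology governs extensions of ordinary modules, and every restricted module is in particular an ordinary module. I would first recall (from, e.g., the May spectral sequence or the work of Hochschild on restricted Lie algebras) the spectral sequence with $\E_2$-term built from $\Homol_*(L,-)$ and the ``divided-power'' / polynomial factor $S^\bullet(L^\star)$ or its truncation, converging to $\Homol^\bullet(L,M)$. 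Because $\cd_*(L) = 1$, this spectral sequence degenerates dramatically: only $\Homol^0_*(L,M) = M^L$ and $\Homol^1_*(L,M)$ survive, so the $\E_2$-page already has only two nonzero rows.

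The key steps, in order, would be: (1) Set up the comparison spectral sequence relating $\Homol_*$ and $\Homol$; identify its $\E_2$-term as $(\bigwedge^\bullet L)^\star \otimes \Homol^\bullet_*(L,M)$ — the exterior-power factor coming from the cohomology of the abelian quotient controlling the difference between ordinary and restricted derived functors, and being exactly $(\bigwedge^\bullet L)^\star$ when we take trivial-type coefficients. Actually more carefully: I expect the correct statement to be that $\C^\bullet(L,M)$ admits a filtration whose associated graded is $(\bigwedge^\bullet L)^\star \otimes \C^\bullet_*(L,M)$ in an appropriate sense, giving a spectral sequence $\E_2^{s,t} = (\bigwedge^s L)^\star \otimes \Homol^t_*(L,M) \Rightarrow \Homol^{s+t}(L,M)$. (2) Invoke $\cd_*(L) = 1$ to kill all rows $t \ge 2$, leaving only $t = 0$ and $t = 1$. (3) Check that the single possibly-nonzero differential $d_2 \colon \E_2^{s,1} \to \E_2^{s+2,0}$ vanishes — this is where I expect to need a genuine argument rather than formal nonsense. (4) Conclude $\Homol^n(L,M) = \E_\infty^{n,0} \oplus \E_\infty^{n-1,1} = ((\bigwedge^n L)^\star \otimes M^L) \oplus ((\bigwedge^{n-1} L)^\star \otimes \Homol^1_*(L,M))$, after also checking the extension problem on $\E_\infty$ splits (which it does over a field, since everything in sight is a $K$-vector space).

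The main obstacle will be Step (3), the vanishing of $d_2$ (and, if the spectral sequence is not concentrated enough, possibly higher differentials). One approach: observe that $\E_2^{s,0} = (\bigwedge^s L)^\star \otimes M^L$ is precisely $\Homol^s(L, M^L)$ where $M^L$ carries the trivial action — no wait, that requires $M^L$ to be a trivial module on the nose, which it is — so the bottom row is genuinely $\Homol^\bullet(L, M^{L,\mathrm{triv}})$, and the edge map $\Homol^s(L,M) \to \E_2^{s,0}$ should be induced by a module map $M \to $ (something), forcing $d_2$ on that row's target to vanish by naturality/surjectivity of the appropriate edge homomorphism. Dually, the row $t=1$ receives an injection from $\E_\infty$, and $\Homol^1_*(L,M)$ sits inside $\Homol^1(L,M)$ as the restricted derivations modulo inner ones — a subspace of all derivations modulo inner ones — which pins down $\E_\infty^{0,1} = \E_2^{0,1}$ and starts an inductive degeneration argument. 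I would try to make this precise by exhibiting explicit cochain-level splittings: a restricted module $M$ over a $p$-algebra has the property that the restriction map on cochains $\C^\bullet(L,M) \to$ (restricted cochains) admits a section compatible with the filtration, which would force all differentials $d_r$, $r \ge 2$, to vanish for degree reasons. If such an explicit section is unavailable, the fallback is the five-term exact sequence plus dimension-counting in low degrees combined with induction on $n$ using the known structure of $\cd_* \le 1$ algebras (e.g. via Lemma \ref{lemma-ext} and the splitting of restricted extensions), reducing the general case to free Lie $p$-algebras and tori where both sides of (\ref{eq-ss}) can be computed by hand.
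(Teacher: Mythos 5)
Your overall strategy (a spectral sequence relating restricted to ordinary cohomology, only two surviving rows because $\cd_*(L)=1$, degeneration, and a vector-space splitting of the abutment filtration) is the same as the paper's, but the proof as proposed has a genuine gap exactly at your step (3), and the gap is created by the way you index the spectral sequence. The paper uses the Grothendieck-type spectral sequence of Friedlander--Parshall \cite[Proposition 5.3]{fp} (see also \cite[Theorem 4.1]{farns-coho}, \cite[Corollary 1.3]{muzere}), whose $\E_2$ term is $\E_2^{st}=\C^t(L,\Homol^s_*(L,M))$ converging to $\Homol^{s+t}(L,M)$, where the \emph{restricted} degree $s$ is the filtration index, so that the differentials are $d_r\colon \E_r^{s,t}\to\E_r^{s+r,\,t-r+1}$. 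Once $\Homol^s_*(L,M)=0$ for $s\ge 2$, only the two adjacent columns $s=0,1$ survive, and every $d_r$ with $r\ge 2$ has zero source or zero target; degeneration at $\E_2$ is therefore automatic, and there is no differential to argue away. In your transposed version, with the exterior-power degree in the filtration position, the differential $d_2\colon \E_2^{s,1}\to\E_2^{s+2,0}$ is not formally zero, and you do not prove it vanishes: the edge-homomorphism/naturality remark does not force vanishing of $d_2$ (identifying the bottom row with $\Homol^\bullet(L,M^L)$ says nothing about differentials hitting it), the hoped-for cochain-level section ``compatible with the filtration'' is not exhibited, and the fallback reduction to free Lie $p$-algebras and tori presupposes precisely the structure statement that the paper leaves as an open conjecture, so it is not available.

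A secondary weakness is your step (1): the existence and identification of the $\E_2$ term is only conjectured (``I expect the correct statement to be\dots''), whereas this is the load-bearing input and must be taken from (or proved as in) the cited sources; note also that the cited spectral sequence comes with the coefficients placed as $\C^t(L,\Homol^s_*(L,M))$, not with an a priori tensor decomposition. Your final remark that the extension problem splits because everything is a $K$-vector space is fine and matches the paper. So the fix is not to find an argument for $d_2=0$, but to use the correctly indexed spectral sequence, after which your steps (2) and (4) go through verbatim and reproduce the paper's proof.
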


Here $\Hom(\bigwedge^n V, W)$ denotes, as usual, the space of skew-symmetric 
$n$-linear maps from one vector space to another.

\begin{proof}
This follows from a particular form of the Grothendieck spectral sequence 
relating restricted and ordinary cohomology. Namely, for a Lie $p$-algebra $L$
and a restricted $L$-module $M$, there is a spectral sequence with the $\E_2$ 
term 
$$
\E_2^{st} \simeq \Hom\Big(\bigwedge\nolimits^t L,\> \Homol^s_*(L,M)\Big)
$$ 
converging to $\Homol^{s+t}(L,M)$ (see \cite[Theorem 4.1]{farns-coho}, and
also \cite[Proposition 5.3]{fp} and \cite[Corollary 1.3]{muzere}). 

If $\Homol^s_*(L,M) = 0$ for $s \ge 2$, the only nonvanishing $\E_2$ terms are
$\E_2^{0t}$ and $\E_2^{1t}$. Hence the spectral sequence stabilizes at $\E_2$, 
$\Homol^n(L,M) \simeq \E_2^{0n} \oplus \E_2^{1,n-1}$ for any $n \ge 1$, and 
(\ref{eq-ss}) follows.
\end{proof}

Lemma \ref{prop-3} provides yet another proof of the fact that a Lie $p$-algebra
$L$ of restricted cohomological dimension one is infinite-dimensional (which 
follows also from Lemma \ref{lemma-fin}), without appealing to any computation of Hochschild cohomology. Indeed, suppose the 
contrary, and take in (\ref{eq-ss}) $n = \dim L + 1$. Then the left-hand side 
and the first direct summand at the right-hand side of the isomorphism vanish, 
and the second direct summand is isomorphic to $\Homol^1_*(L,M)$. Therefore, 
$\Homol^1_*(L,M) = 0$ for any restricted $L$-module $M$, i.e., $L$ is of 
restricted cohomological dimension zero, a contradiction.

Moreover, a stronger statement holds:

\begin{proposition}
A Lie $p$-algebra of restricted cohomological dimension one has infinite 
(ordinary) cohomological dimension.
\end{proposition}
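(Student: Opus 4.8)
The plan is to extract everything from Lemma \ref{prop-3}. Since $\cd_*(L) = 1$, there is, by definition, a restricted $L$-module $M$ with $\Homol^1_*(L,M) \ne 0$. I would fix this one module once and for all — the key point being that a single module will witness non-vanishing of ordinary cohomology in \emph{every} degree — and feed it into the isomorphism (\ref{eq-ss}).

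First I would record that $L$ is infinite-dimensional; this was observed immediately after Lemma \ref{prop-3} (and also follows from Lemma \ref{lemma-fin}). Consequently $\bigwedge\nolimits^{k} L \ne 0$ for every $k \ge 0$, and therefore $(\bigwedge\nolimits^{k} L)^\star \ne 0$ for every $k \ge 0$. Now, for any $n \ge 1$, the isomorphism (\ref{eq-ss}) exhibits $(\bigwedge\nolimits^{n-1} L)^\star \otimes \Homol^1_*(L,M)$ as a direct summand of $\Homol^n(L,M)$. Both tensor factors are nonzero $K$-vector spaces, so the tensor product is nonzero, and hence $\Homol^n(L,M) \ne 0$. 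Since a restricted $L$-module is in particular an ordinary $L$-module, and the cohomology on the left-hand side of (\ref{eq-ss}) is the ordinary Chevalley--Eilenberg cohomology, this gives $\cd(L) \ge n$ for every $n$, that is, $\cd(L) = \infty$.

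I do not expect a genuine obstacle here: the substantive work — the collapse of the Grothendieck-type spectral sequence to the splitting (\ref{eq-ss}) — is already carried out in Lemma \ref{prop-3}. The only subtlety worth stating explicitly is that a single $M$, chosen to satisfy $\Homol^1_*(L,M) \ne 0$, simultaneously forces $\Homol^n(L,M) \ne 0$ for all $n \ge 1$; this is immediate from the displayed formula, since its second summand depends on $M$ only through $\Homol^1_*(L,M)$, which remains fixed and nonzero while the exterior-power factor is nonzero by infinite-dimensionality of $L$.
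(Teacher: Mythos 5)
Your proposal is correct, and it rests on exactly the same two ingredients as the paper's proof: the splitting (\ref{eq-ss}) from Lemma \ref{prop-3}, and the infinite-dimensionality of $L$ (via Lemma \ref{lemma-fin} or the remark following Lemma \ref{prop-3}). The only difference is which part of (\ref{eq-ss}) you exploit: you pick a restricted module $M$ witnessing $\Homol^1_*(L,M) \ne 0$ (such an $M$ exists since $\cd_*(L)=1$) and use the second summand $\bigl(\bigwedge\nolimits^{n-1} L\bigr)^\star \otimes \Homol^1_*(L,M)$, while the paper simply takes $M=K$ and reads off the first summand, which becomes $\bigl(\bigwedge\nolimits^{n} L\bigr)^\star \otimes K^L = \bigl(\bigwedge\nolimits^{n} L\bigr)^\star \ne 0$; the latter choice is marginally more economical, since it needs no appeal to the existence of a module with nonvanishing first restricted cohomology, but both arguments are sound and yield $\Homol^n(L,M)\ne 0$ for all $n\ge 1$ with a single module.
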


\begin{proof}
Let $L$ be a Lie algebra of restricted cohomological dimension one. Taking in 
(\ref{eq-ss}) $M=K$, we get 
$$
\Homol^n(L,K) \simeq 
\Big(\bigwedge\nolimits^n L\Big)^\star \oplus 
\Hom\Big(\bigwedge\nolimits^{n-1} L,\> \Homol^1_*(L,K)\Big) .
$$
Either by Lemma \ref{lemma-fin}, or by the reasoning above, $L$ is 
infinite-dimensional, and thus $\bigwedge^n L$, and hence $\Homol^n(L,K)$, does 
not vanish for any $n \ge 1$.
\end{proof}

\section*{Acknowledgements}

Thanks are due to J\"org Feldvoss and Oleg Gatelyuk for useful discussions, and
to the anonymous referee for useful remarks correcting a few errors in the 
previous version of the manuscript. The hospitality of University of S\~ao Paulo
during an early stage of this work is gratefully acknowledged. This work was supported by the 
Regional Authority of the Moravian-Silesian Region (grant 01211/2016/RRC).

\end{document}